\documentclass[oneside]{amsart}

\pdfoutput=1
\usepackage{color}
\usepackage{changepage}
\usepackage[hyphens]{url}
\usepackage{hyperref}
\usepackage[hyphenbreaks]{breakurl}

\usepackage[
    backend=biber,
    style=alphabetic,
    sorting=nyt
]{biblatex}
\usepackage{amsmath}
\usepackage{amssymb}
\usepackage{booktabs}
\addbibresource{genus2.bib}
\usepackage{tikz-cd}
\usetikzlibrary{matrix,arrows,decorations.pathmorphing}

\usepackage{enumitem}
\newlist{step}{enumerate}{1}
\setlist[step]{label=\textbf{Step \arabic*.}}

\usepackage{makecell}

\usepackage{pdflscape}

\newcommand{\ZZ}{\mathbb{Z}} 
\newcommand{\NN}{\mathbb{N}} 
\newcommand{\QQ}{\mathbb{Q}} 
\newcommand{\PP}{\mathbb{P}} 
\newcommand{\KK}{\mathbb{K}} 
\newcommand{\CO}{\mathcal{O}}
\newcommand{\GG}{\mathbb{G}}
\newcommand{\A}{\mathbb{A}}
\newcommand{\val}{\mathfrak{v}}
\newcommand{\lm}{\lambda}
\newcommand{\cX}{\mathcal{X}}

\newcommand{\cF}{\mathcal{F}}
\newcommand{\kf}{Khovanskii-finite}
\newcommand{\pgl}[1]{\mathrm{PGL}(#1,\KK)}

\DeclareMathOperator{\Spec}{Spec}
\DeclareMathOperator{\Proj}{Proj}
\DeclareMathOperator{\ord}{ord}

\DeclareMathOperator{\sing}{Sing}

\newtheorem{theorem}{Theorem}[section]
\newtheorem{proposition}[theorem]{Proposition}
\newtheorem{lemma}[theorem]{Lemma}

\theoremstyle{definition}
\newtheorem{definition}[theorem]{Definition}
\newtheorem{rem}[theorem]{Remark}
\newtheorem{question}[theorem]{Question}
\newtheorem{example}[theorem]{Example}
\newtheorem{construction}[theorem]{Construction}
\newtheorem{algorithm}[theorem]{Algorithm}
\newtheorem{notation}[theorem]{Notation}

\title{Khovanskii-finite rational curves of arithmetic genus 2}
\subjclass[2010]{Primary 14H45; Secondary 13A18, 14M25}
\author{Nathan Ilten}
\address{Department of Mathematics, Simon Fraser University,
8888 University Drive, Burnaby BC V5A1S6, Canada}
\email{\href{mailto:nilten@sfu.ca}{nilten@sfu.ca}}

\author{Ahmad Mokhtar}
\address{Department of Mathematics, Simon Fraser University,
8888 University Drive, Burnaby BC V5A1S6, Canada}
\email{\href{mailto:ahmad\_mokhtar@sfu.ca}{ahmad\_mokhtar@sfu.ca}}
\thanks{Both authors were supported by NSERC}

\date{}

\begin{document}

    \begin{abstract}
        We study the existence of Khovanskii-finite valuations for rational curves of arithmetic genus two. We provide a semi-explicit description of the locus of degree $n+2$ rational curves in $\PP^n$ of arithmetic genus two that admit a Khovanskii-finite valuation. Furthermore, we describe an effective method for determining if a rational curve of arithmetic genus two defined over a number field admits a Khovanskii-finite valuation. This provides a criterion for deciding if such curves admit a toric degeneration.
        Finally, we show that rational curves with a single unibranch singularity are always Khovanskii-finite if their arithmetic genus is sufficiently small.
    \end{abstract}

    \maketitle

    \section{Introduction}

    \subsection{Motivation}
    Toric degenerations appear in a wide range of contexts including mirror symmetry \cite{MS}, numerical algebraic geometry \cite{sottile}, and the study of Seshadri constants \cite{seshadri}. It is thus an important problem to understand which projective varieties have flat degenerations to toric varieties.
    Such degenerations do not necessarily  exist. More specifically, if one fixes a projective embedding of a variety $X\hookrightarrow \PP^n$, and only considers $\GG_m$-equivariant embedded degenerations with respect to any Veronese re-embedding of $X$, a toric degeneration may not exist. This is in particular the case for very general embeddings of smooth curves of genus at least two \cite[pp 2354]{akl}, and very general rational plane quartic curves \cite[Theorem 4.1]{ilten-wrobel}.

    A fruitful approach to constructing toric degenerations is due to D.~Anderson \cite{anderson}.
    Let $X$ be a projective variety over an algebraically closed field $\KK$  with homogeneous coordinate ring $S$.
    We say that a valuation on $S$ is \emph{Khovanskii-finite} if it is homogeneous, and the value semigroup is finitely generated of rank equal to $\dim X+1$, see \S\ref{sec:val} and Definition \ref{def:kfinite}. To any such Khovanskii-finite valuation $\val$, Anderson constructs a degeneration of $X$ to the toric variety whose homogeneous coordinate ring is the semigroup algebra associated to the value semigroup of $\val$ \cite[Theorem 1.1]{anderson}.

    It turns out that Anderson's construction is quite general: every projective $\GG_m$-equivariant flat family $\pi:\cX\to\A^1$ with $\pi^{-1}(1)\cong X$ and $\pi^{-1}(0)$ toric arises from the above-mentioned construction for some embedding of $X$, see e.g.~\cite[Theorem 1.11]{c1degen}. Here, the $\GG_m$-action on $\A^1$ is the standard one.
    The generality of this construction leads to the following question:
    \begin{question}
        \label{q:1}
        Given a projective variety $X=\Proj S$, does its homogeneous coordinate ring $S$ admit a Khovanskii-finite valuation (cf. \cite[Problem 2]{kaveh-manon})?
    \end{question}

    The first author and M.~Wrobel showed that in characteristic zero, although the answer to this question is ``yes'' when $X$ is a rational curve of arithmetic genus $0$ or $1$, there are rational curves of higher arithmetic genus whose homogeneous coordinate rings do not admit a Khovanskii-finite valuation \cite{ilten-wrobel}. In fact, in characteristic zero the homogeneous coordinate ring of a very general rational plane curve of degree at least four does not admit any Khovanskii-finite valuations \cite[Theorem 1.4]{ilten-wrobel}.

    While considering Khovanskii-finiteness for rational curves, the first interesting case is that of arithmetic genus two. This case is the subject of this paper. Before describing our approach and results, we remark that we were surprised to be able to obtain such a complete answer to Question \ref{q:1} in this case. We do not know if this question is even decidable in general!

    \begin{rem}
        It might seem quite special to focus on Question \ref{q:1} for (singular) rational curves. However, this case is in fact one of the most fundamental ones to answering the question in general. By \cite[Corollary 3]{c1degen}, any projective variety $X$ can
        be degenerated to a variety $X'$ equipped with a complexity-one torus action. If $X'$ is rational (for example we are considering a $\QQ$-Gorenstein degeneration of a Fano variety), \cite[Theorem 4.1]{ilten-wrobel} says that finding a toric degeneration of $X'$ amounts to determining if a finite number of (singular) rational curves admit a common Khovanskii-finite valuation. If a toric degeneration of $X'$ exists that is compatible with the torus action on $X'$, we believe this can be combined with the degeneration of $X$ to $X'$ to give a degeneration in one step as in \cite[\S5]{c1degen}.
    \end{rem}

    \subsection{Approach and Results}
    In the remainder of the paper, we will assume that the ground field $\KK$ has characteristic zero.
    Any non-degenerate rational curve $X\subset \PP^n$ of degree $d$ can be realized as a projection from the rational normal curve $C_d\subset \PP^d$ of degree $d$. In fact, if $X$ has arithmetic genus $g$ and $g\leq d-n$, then $X$ is an isomorphic projection of a curve $X'\subset \PP^{d-g}$, see Proposition \ref{prop:project}. Furthermore, a valuation $\val$ on the homogeneous coordinate ring of $X$ is Khovanskii-finite if and only if it is Khovanskii-finite when viewed as a valuation on the homogeneous coordinate ring of $X'$, see Proposition \ref{prop:same}. Since we are focusing on curves with $g=2$ we may thus assume without loss of generality that $n=d-2$, and $X$ is obtained from $C_d$ by projection from a line, see Remark \ref{rem:proj}.

    The possible singularities that can occur in such a curve were classified by the first author together with J.~Buczy\'nski and E.~Ventura \cite[Theorem 1.2]{buczynski-singular}. The singularities are determined by the intersection behavior of the center of projection with linear spaces of $\PP^d$ spanned by collections of osculating flags.

    To address Question \ref{q:1} for such curves, we consider the stratification of the space of such curves by singularity type, or equivalently, by intersection behavior of the center of projection with certain linear spaces in $\PP^d$.
    The techniques of \cite{buczynski-singular} lead to explicit parameterizations for the curves in each stratum. Using the criteria of Khovanskii-finiteness from \cite[Theorem 3.5]{ilten-wrobel}, we are able to describe the locus of curves in each stratum whose coordinate rings admit a Khovanskii-finite valuation as a countable union of Zariski-closed sets, and we write down explicit equations for each such set. See Theorem \ref{mainthm:1} and Remark \ref{rem:locus}.

    While Theorem \ref{mainthm:1} gives a nice description for the locus of Khovanskii-finite rational curves of arithmetic genus $2$, it does not actually answer Question \ref{q:1} in this case: checking infinitely many polynomial identities is not \emph{a priori} possible in finite time. However, we show in Theorem \ref{thm:general-bound} that it is in fact possible to algorithmically determine the answer to this question in our case of study.

    For most strata appearing in Theorem \ref{mainthm:1}, the general curve is not Khovanskii-finite. However, for the strata corresponding to curves with a unique unibranch singularity and no other singularities, the corresponding coordinate rings always admit a Khovanskii-finite valuation. This is in fact a more general phenomenon. We show in Theorem \ref{mainthm:3} that for \emph{any} degree $d$ rational curve $X\subset \PP^n$ with arithmetic genus at most $d/2$, if $X$ has a unique unibranch singularity and no other singularities, the corresponding coordinate ring admits a Khovanskii-finite valuation.

    Throughout, a \emph{curve} $X$ is an integral scheme of dimension one in $\PP^n$; $X$ being \emph{non-degenerate} means it is not contained in any proper linear subspace.

    We now describe the organization of the rest of this paper. In \S \ref{sec:curves}, we introduce notation for rational curves and state some basic results. We continue with a discussion of valuations and Khovanskii-finiteness in \S \ref{sec:val}. In \S \ref{sec:param} we use the techniques of \cite{buczynski-singular} to write down explicit parameterizations for degree $n+2$ curves in $\PP^n$, which we use in \S \ref{sec:kfinite} to determine the locus of Khovanskii-finite curves, proving Theorem \ref{mainthm:1}. In \S \ref{sec:bound}, we show how to decide Question \ref{q:1} for such curves and prove Theorem \ref{thm:general-bound}. The statement and proof of Theorem \ref{mainthm:3} are contained in \S \ref{sec:unibranched}. 

    \subsection*{Acknowledgements}
    This paper was written on the unceded traditional territories of the Squamish, Tsleil-Waututh, Musqueam and Kwikwetlem Peoples. Both authors were supported by NSERC. Portions of this paper are contained in the MSc thesis of the second author. We thank Jake~Levinson for several stimulating questions.
We thank the anonymous referees for their useful comments.

    \section{Preliminaries}\label{sec:prelim}

    \subsection{Rational Curves}\label{sec:curves}
    This section introduces basic notation and results on rational curves. Recall from the introduction that we will always work over an algebraically closed field $\KK$ of characteristic zero. A curve is an integral scheme of dimension one in $\PP^n$. Being non-degenerate means a curve is not contained in any proper linear subspace.

    Let $X\subset\PP^n$ be a rational curve of degree $d$.
    If $X$ is non-degenerate, then necessarily $d\geq n$ and equality holds if and only if $X$ is a rational normal curve of degree $n$.
    A \emph{rational normal curve} of degree $d$ is a curve projectively equivalent to the image of the Veronese embedding $\nu_d:\PP^1\to\PP^d$ given by
    \begin{equation*}
        (x:y)\mapsto(x^d:x^{d-1}y:\ldots:y^d).
    \end{equation*}
    We denote the image of the Veronese embedding by $C_d$.
    Anytime we mention the rational normal curve, we mean $C_d$ unless stated otherwise.

    For us, the importance of the rational normal curves is due to the fact that their projections give all rational curves.
    We briefly discuss these projections.
    Let $V$ be a $d+1$ dimensional vector space over $\KK$.
    Assume $U\subset V$ is a $d-n$ dimensional subspace for some $n<d$.
    The natural linear map $V\to V/U$ induces a rational map
    \begin{equation*}
        \pi:\PP(V)\dashrightarrow\PP(V/U).
    \end{equation*}
    After choosing a basis of $V$ and $V/U$, we may identify $\PP(V)$ with $\PP^d$ and $\PP(V/U)$ with  $\PP^n$.
    In this case we say that $\pi$ is the projection of $\PP^d$ onto $\PP^n$ with center $\PP(U)\subset\PP(V)$.

    We say a projection $\pi:\PP^d\dashrightarrow\PP^n$ is \emph{basepoint free} with respect to a curve $C\subset \PP^d$ if its center does not intersect $C$. If we omit the curve $C$, then we mean that $\pi$ is basepoint free with respect to the rational normal curve $C_d$.
    The projection $\pi$ restricts to a morphism on $C$ if $C$ is smooth, or if $\pi$ is basepoint free with respect to $C$.

    If $\pi$ is basepoint free and $\frac{d}{2}< n < d$, then the induced morphism $\pi:C_d\to\PP^n$ is birational onto its image~\cite[Lemma 6.1, 6.2]{buczynski-singular}.
    A straightforward argument using Bertini's theorem shows that if the morphism $\pi:C_d\to\PP^n$ is birational onto its image, then its image also has degree $d$.

    Conversely, if $X\subset \PP^n$ is a non-degenerate rational curve of degree $d > n$, there exists a basepoint free projection $\PP^d\dashrightarrow\PP^n$ inducing a morphism $\pi:C_d\to \PP^n$ with $\pi(C_d)=X$.
    Indeed, let $\phi:\PP^1\to X$ be the normalization of $X$. The embedding of $\PP^1$ in $\PP^d$ by the full linear system of $\phi^*(\CO_{\PP^n}(1))$ is the rational normal curve $C_d$, and the pullback of the global sections of $\CO_{\PP^n}(1)$ determine the map $\pi:C_d\to\PP^n$.

    \begin{example}[A rational quintic in $\PP^3$]
        \label{ex:rational-quintic}
        Let $V=\KK^6$ and consider the projection $\pi:\PP^5\dashrightarrow\PP^3$ with center $\PP(U)$, where $U=\mathrm{span}\{(1,0,0,0,0,1),(5,1,0,0,0,0)\}$.
        Under an appropriate choice of basis, the map $\pi$ is given by
        \begin{equation*}
            \pi(x_0:\ldots:x_5)=(x_0-5x_1-x_5:x_2:x_3:x_4).
        \end{equation*}
        Since $\PP(U)\subset\PP^5$ does not intersect $C_5$, the projection is basepoint free and the above discussion tells us that $\pi(C_5)$ is a non-degenerate rational curve of degree 5 in $\PP^3$.
        We will come back to this example in later sections.
    \end{example}

    We now recall compact notation for the homogeneous coordinate ring of a rational curve.
    \begin{notation}[{See~\cite[{Construction 3.1}]{ilten-wrobel}}]
        Let $L\subset\KK[x,y]_d$ be an $m$-dimensional linear subspace where $\KK[x,y]_d$ is the $\KK$-vector space of homogeneous polynomials of degree $d$ in $x$ and $y$.
        Define
        \begin{equation}
            \label{eq:RL}
            R(L)=\bigoplus_{k\geq0}L^k,
        \end{equation}
        where $L^k$ denotes all linear combinations of elements of the form $\ell_1\cdots \ell_k$ with $\ell_i\in L$.
        Since $L^{k_1}L^{k_2}\subset L^{k_1+k_2}$, $R(L)$ is naturally a finitely generated graded $\KK$-algebra.
        A basis for $L$ provides a set of algebra generators for $R(L)$.
    \end{notation}
    This connects to the discussion on projections as follows. The rational normal curve $C_d$ is embedded in $\PP(V)$, where $V$ is the vector space dual to $\KK[x,y]_d$. The choice of some $U\subset V$ determines $L\subset V^*$ as the orthogonal complement of $U$:
    \[
        L=U^\perp\subset \KK[x,y]_d.
    \]
    It follows from the description of $R(L)$ that it is the homogeneous coordinate ring for $\pi(C_d)$, where $\pi$ is the projection with center $\PP(U)$. Conversely, given some $0\neq L\subset \KK[x,y]_d$, we obtain a projection $\pi$ from $\PP(U)$, where $U=L^\perp\subset \KK[x,y]_d^*=V$. In the remainder of the paper, we freely switch between both points of view.

    We state a proposition that will let us reduce from the case of arithmetic genus two rational curves to curves in $\PP^n$ of degree $n+2$. In our algorithm for determining Khovanskii-finiteness, we will be concerned about fields of definition, so we first introduce some terminology.
 Consider a basepoint free projection $\pi:C\to X$ of rational curves and a number field $F$. We say that $\pi$ has \emph{ramification defined over $F$} if $C$, $X$, and $\pi$ are defined over $F$, as is every point in $\pi^{-1}(\sing (X))$. Here, $\sing(X)$ is the singular locus of $X$.
    \begin{proposition}
        \label{prop:project}
        Let $C$ be a rational curve and $\pi:\PP^d \dashrightarrow \PP^n$ be a projection which is basepoint free with respect to $C$. Let $X=\pi(C)$, and denote the arithmetic genera of $C$ and $X$ by $g(C)$ and $g(X)$. Assume that $\pi:C\to X$ is birational and $g(X)-g(C)\leq d-n$. Then $\pi$ factors as a composition $\pi'\circ\pi''$ of basepoint free projections
        \[
            \begin{tikzcd}
                \PP^d \arrow[r,dashrightarrow]& \PP^{n'} \arrow[r,dashrightarrow]&\PP^n\\
                C \arrow[r,"\pi''"] \arrow[u,hookrightarrow]& X'\arrow[r,"\pi'"] \arrow[u,hookrightarrow]&X\arrow[u,hookrightarrow]
            \end{tikzcd}
        \]
        where $n'=d+g(C)-g(X)$ and $\pi':X'\to X$ is an isomorphism.
        In particular, any degree $d$ curve $X\subset \PP^n$ with arithmetic genus $g$ satisfying $g\leq d-n$ is obtained via an isomorphic projection from a degree $d$ curve $X'\subset \PP^{n'}$, with $n'=d-g$.

Furthermore, if $\pi$ has ramification defined over a number field $F$, then $\pi''$ may be chosen so that it does as well.

\end{proposition}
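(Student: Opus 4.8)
The plan is to inspect the intermediate center $U''$ that produces $\pi''$ in the first part of the proof and to check that the ramification hypothesis forces it, together with $\pi''$ and $X'$, to be defined over $F$. Recall that $\pi$ is the projection with center $\PP(U)$ for a subspace $U\subseteq V$ of dimension $d-n$, and that the factorization is obtained by exhibiting a subspace $U''\subseteq U$ of dimension $d-n'=g(X)-g(C)$ such that the projection $\pi''$ with center $\PP(U'')$ already introduces every singularity of $X$; the residual projection $\pi'$ from the image of $U$ in $V/U''$ is then an isomorphism onto $X$, and $X'=\pi''(C)$.

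The key point I would emphasize is that $U''$ is assembled canonically from $U$ together with the osculating data of $C$ along the fibre $\pi^{-1}(\sing X)$. For each singular point $x\in\sing X$ with preimages $p_1,\dots,p_r\in\pi^{-1}(x)$, the local contribution to the genus jump is governed by a linear span $\Lambda_x$ of osculating flags of $C$ at the $p_i$, and one has $U''=\sum_{x\in\sing X}(U\cap\Lambda_x)$. Because the total genus jump is exactly $g(X)-g(C)$, this span already has dimension $d-n'$, so $U''$ is determined intrinsically, with no auxiliary choice of a complement.

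With this description in hand I would track fields of definition directly. Since $\pi$ and $C$ are defined over $F$, the subspace $U$ is $F$-rational and the normalization of $C$ is $\PP^1_F$; consequently the osculating flags appearing in each $\Lambda_x$, being determined by the $F$-rational embedding of the normalization and by $F$-rational points, are defined over $F$. By hypothesis every point of $\pi^{-1}(\sing X)$ is $F$-rational, so each $\Lambda_x$ is defined over $F$; as intersections and sums of $F$-rational subspaces remain $F$-rational, $U''$ is defined over $F$, and hence so are $\pi''$, $X'=\pi''(C)$ and $\pi'$. Finally, since $\pi=\pi'\circ\pi''$ with $\pi'$ an isomorphism, we have $(\pi'')^{-1}(\sing X')=\pi^{-1}(\sing X)$, so the points over which $\pi''$ ramifies are exactly those of $\pi$ and are therefore defined over $F$; this gives that $\pi''$ has ramification defined over $F$.

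The step requiring the most care is the canonicity of $U''$: I must confirm that the first part of the proof genuinely produces $U''$ as the intrinsic span $\sum_{x}(U\cap\Lambda_x)$ of the expected dimension $d-n'$, rather than as some arbitrary $(d-n')$-dimensional subspace merely containing these intersections. If a nontrivial choice of complement were unavoidable, one would instead invoke a Galois-descent argument to select an $F$-rational representative; but the dimension bookkeeping above indicates that the span is already of the correct dimension, so no such choice arises and the descent is automatic.
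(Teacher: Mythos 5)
There is a genuine gap, and it sits exactly where you flagged it: the claim that the factorization can be produced by a canonical center $U''=\sum_{x\in\sing X}(U\cap\Lambda_x)$ whose dimension is exactly $g(X)-g(C)$. You assert this dimension count (``the total genus jump is exactly $g(X)-g(C)$, so this span already has dimension $d-n'$'') but never prove it, and the underlying identity $\delta_x=\dim(U\cap\Lambda_x)$ is a statement of the same depth as the classification results of Buczy\'nski--Ilten--Ventura, which are only established for delta-invariant at most $2$, whereas the proposition concerns an arbitrary genus jump. Moreover, for the natural readings of $\Lambda_x$ the identity is simply false. Take $d=8$ and $L=\langle x^8,\,x^6y^2+x^5y^3,\,x^4y^4,\,x^2y^6,\,xy^7,\,y^8\rangle\subset\KK[x,y]_8$, so that $U=L^\perp=\langle e_1^*,\,e_2^*-e_3^*,\,e_5^*\rangle$ (dual basis to $e_i=x^{8-i}y^i$) and $X=\pi(C_8)\subset\PP^5$. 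The only singular point of $X$ is the image $x$ of $P=(1:0)$; its local ring $\KK[t^2+t^3,t^4,t^6,t^7,t^8]$ has value semigroup $\{0,2,4,5,6,\dots\}$ (note that $(t^2+t^3)^2-t^4$ has order $5$), so $\delta_x=2$ and $g(X)=2\leq d-n=3$. Yet $U\subset V^6(P)$, so the span of osculating spaces at $P$ that detect the singularity meets $U$ in dimension $3$, not $2$: the choice of $\Lambda_x$ matters, the naive choice overshoots, and in general $\dim(U\cap\Lambda_x)$ only bounds the local genus contribution from above. (Here the correct center is $U\cap V^4(P)$, and one must additionally prove that the residual projection is an isomorphism; what saves it is precisely the cancellation producing the order-$5$ element, a phenomenon a one-step linear-algebra argument has to control.) So both halves of your plan --- existence of the factorization with $\pi'$ an isomorphism, and the canonicity/dimension count for $U''$ --- remain unproven, and the second is genuinely delicate; your fallback Galois-descent argument is mentioned but not carried out.

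The paper avoids all of this machinery by inducting on $g(X)-g(C)$, proving existence and $F$-rationality simultaneously. If $\pi$ is not an isomorphism, then $\PP(U)$ meets a secant or tangent line of $C$; since the ramification is defined over $F$, that line is spanned by (or tangent at) $F$-rational points of $\pi^{-1}(\sing(X))$, hence is $F$-rational, and the intersection point $u$, being the intersection of two $F$-rational linear spaces, is $F$-rational. Projecting from $u$ alone drops the ambient dimension by one, strictly increases the genus of the image, and preserves all hypotheses, so the induction closes, and $\pi''$ is the composite of these single-point projections. No classification of singularities, no osculating spans, and no canonicity are needed: an $F$-rational choice is made at each step, so the descent problem you were worried about never arises.
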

    \begin{proof}
        We prove the existence of a decomposition $\pi=\pi'\circ \pi''$ by induction on $g(X)-g(C)$. If this difference is zero, then since $\pi:C\to X$ is birational, it must be an isomorphism, and the claim follows. Suppose that $g(X)-g(C)>0$, so $\pi$ is not an isomorphism. Then $\pi$ fails to separate either points or tangent vectors of $C$, so the center of projection $\PP(U)$ intersects either a secant line of $C$, or a tangent line of $C$ at some point. Let $u\in\PP^d$ be such a point of intersection. Then
        letting $\phi''$ be the projection with center $u$, we may factor $\pi$ as the composition $\phi'\circ \phi''$ of $\phi''$ with another projection.

	If $\pi$ has ramification defined over $F$, then the secant or tangent line of $C$ intersected by $\PP(U)$ is defined over $F$, hence so is $u$. It follows that $\phi'$ and $\phi''$ have ramification defined over $F$. 

        By construction, $\phi''$ does not restrict to an isomorphism on $C$, so the arithmetic genus $g(C')$ of $C'=\phi''(C)$ is larger than $g(C)$. The projection $\phi':\PP^{d-1}\dashrightarrow\PP^n$ is basepoint free with respect to $C'$, and restricts to a birational morphism on $C'$. Furthermore, $g(X)-g(C')<g(X)-g(C)$ and $g(X)-g(C')\leq (d-1)-n$, so by induction, $\phi'$ factors as in the statement of the proposition. Composing the first morphism in this factorization with $\phi''$ completes the induction step, showing the existence of a decomposition 
$\pi=\pi'\circ \pi''$. The statement regarding $\pi''$ having ramification over $F$ also follows by induction, since the intermediate morphisms $\phi'$ and $\phi''$ had ramification defined over $F$.

    We now consider the claim that any degree $d$ curve $X\subset \PP^n$ with arithmetic genus $g$ satisfying $g\leq d-n$ is obtained via an isomorphic projection from a degree $d$ curve $X'\subset \PP^{n'}$, with $n'=d-g$.
We use the observation from above that $X$ is the basepoint free birational projection $\pi$ of the rational normal curve $C_d$. The claim now follows from the above factorization of $\pi$.
    \end{proof}

    \subsection{Valuations and Khovanskii-Finiteness}\label{sec:val}
    This section reviews basic definitions and results about valuations.
    For details on valuation theory, see e.g.~\cite[Ch.\ VI]{zariski-samuel-ii}.

    \begin{definition}
        Let $S$ be a finitely generated $\KK$-domain and let $(\Gamma,>)$ be a totally ordered finitely generated abelian group.
        A \emph{$\KK$-valuation} on $S$ (with values in $\Gamma$) is a map $\val:S\backslash\{0\}\to\Gamma$ satisfying
        \begin{enumerate}
            \item $\val(c)=0$ for $c\in\KK\backslash\{0\}$,
            \item $\val(fg)=\val(f)+\val(g)$ for $f,g\in S\backslash\{0\}$,
            \item $\val(f+g)\geq\min \{\val(f),\val(g)\}$ for $f,g\in S\backslash\{0\}, f+g\neq 0$.
        \end{enumerate}
        The image of $\val$ is a subsemigroup of $\Gamma$ called the \emph{value semigroup}.
        The \emph{rank} of $\val$ is the rank of the subgroup of $\Gamma$ generated by the value semigroup.
    \end{definition}
    \noindent The rank of a valuation on $S$ is at most the Krull dimension of $S$~\cite[Ch. VI, \textsection10]{zariski-samuel-ii}.
    A valuation $\val$ has \emph{full rank} if its rank equals the Krull dimension.
    Any valuation $\val$ on a $\KK$-domain $S$  induces a valuation on its field of fractions $F$, and hence on any other $\KK$-domain contained in $F$. We will denote all these valuations with $\val$.

    Since our focus is on homogeneous coordinate rings of projective varieties, we will in particular be interested in homogeneous valuations.
    Consider a finitely generated graded $\KK$-domain $S$. A valuation $\val$ on $S$ is \emph{homogeneous}
    if for every nonzero $f\in S$ we have
    \begin{equation*}
        \val(f)=\min_{f_k\neq0}\{\val(f_k)\},
    \end{equation*}
    where $f=\sum_{k}f_k$ is the decomposition of $f$ into its homogeneous parts.

    \begin{definition}[Khovanskii-finiteness]
        \label{def:kfinite}
        A valuation $\val$ on a finitely generated homogeneous $\KK$-domain $S$ is \emph{Khovanskii-finite} with respect to $S$ if $\val$ is homogeneous, has full rank, and the value semigroup is finitely generated.
    \end{definition}
    \noindent If $X$ is an embedded projective variety with homogeneous coordinate ring $S$, we say that $\val$ is \kf{} with respect to $X$ if  $\val$ is \kf{} with respect to $S$.

    Given a finitely generated homogeneous $\KK$-domain $S$, how can we determine if it admits a \kf{} valuation?
    This is particularly of interest when $S$ is the homogeneous coordinate ring of a projective variety $X$, because then one can determine whether $X$ admits a toric degeneration (see Question~\ref{q:1} and the preceding discussion).
    Homogeneous valuations for coordinate rings of rational curves are particularly easy to understand:
    \begin{construction}[{\cite[\textsection3]{ilten-wrobel}}]\label{const:val-family}
        Let $L\subset \KK[x,y]_d$ be a linear subspace and fix a point $Q=(\alpha:\beta)\in\PP^1$.
        We obtain a homogeneous valuation $\val_Q$ on \[R(L)=\bigoplus_{k\geq0}L^k\] via
        \begin{equation*}
            \val_Q:f\mapsto \left(k, \ord_Q(f)\right)\quad\text{for}\quad f\in L^k\backslash\{0\},
        \end{equation*}
        where $\ord_Q(f)$ denotes the highest power of $\beta x-\alpha y$ that divides $f$.
        Here $\ZZ^2$ is ordered lexicographically.
    \end{construction}
    \noindent Every full rank homogeneous valuation on $R(L)$ will have a semigroup isomorphic to the value semigroup of $\val_Q$ for some $Q\in \PP^1$ \cite[\S 5.2]{ilten-manon}. Thus, as far as Khovanskii-finiteness for rational curves is concerned, we only need to consider valuations of the form $\val_Q$.

    \begin{rem}\label{rem:flag}
	    Let $Y$ be a $d$-dimensional variety.
	    In Newton-Okounkov theory, it is customary to construct valuations on $\KK(Y)$ coming from a full flag $\cF_0\subset \cF_1\subset \ldots \subset \cF_{d-1}\subset Y$ on a variety $Y$, see \cite[Example 2.13]{no1} or \cite[\S 1.1]{no2} for details. The valuation $\val_Q$ arises in this fashion as follows.

	  In our setting, we let $Y\to \A^2$ be the blowup of $\Spec \KK[x,y]=\A^2$ in the origin, with exceptional divisor $E$. Let $\widehat Q$ be the line in $\A^2$ corresponding to $Q\in \PP^1$, with strict transform $\widehat{Q}'$. Then $\val_Q$ is the valuation corresponding to the flag $\cF_0=\widehat{Q}'\cap E\subset \cF_1=E\subset Y$.
    \end{rem}

    There is a straightforward (albeit non-effective) necessary and sufficient condition for $\val_Q$ to be \kf{}:
    \begin{theorem}[{\cite[Thm.\ 3.5]{ilten-wrobel}}]
        \label{thm:kf-criterion}
        Let $L\subset \KK[x,y]_d$ be a linear subspace.
        Then for a point $Q=(\alpha:\beta)\in\PP^1$, $\val_Q$ is \kf{} with respect to $R(L)$ if and only if there exists $k\in \NN$ such that $(\beta x - \alpha y)^{dk}\in L^k$.
    \end{theorem}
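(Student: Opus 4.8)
The plan is to reformulate \kf{}ness of $\val_Q$ as a statement about the value semigroup $S=\im\val_Q\subseteq\ZZ_{\geq0}^2$ and to analyze it combinatorially. Homogeneity is automatic from Construction~\ref{const:val-family}, so I must characterize when $\val_Q$ additionally has full rank and a finitely generated semigroup. Write $\ell=\beta x-\alpha y$ and $A_k=\{\ord_Q(f):f\in L^k\setminus\{0\}\}\subseteq\{0,\dots,dk\}$, so that $S=\{(k,m):m\in A_k\}$ and multiplication in $R(L)$ gives $A_{k'}+A_{k''}\subseteq A_{k'+k''}$. Since a degree-$dk$ form divisible by $\ell^{dk}$ is a scalar multiple of $\ell^{dk}$, the hypothesis $\ell^{dk}\in L^k$ is equivalent to $dk\in A_k$, i.e.\ to $\max A_k=dk$. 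The theorem thus reduces to the assertion that $S$ has full rank and is finitely generated if and only if the maximal possible order $dk$ is achieved at some level $k$.

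Next I would identify the two extreme rays of $S$. In the setting of the paper $L$ is base-point free and $\pi\colon C_d\to X$ is birational onto a degree-$d$ curve of arithmetic genus $g$, so $\dim L^k=dk+1-g$ for $k\gg0$. Base-point freeness supplies an element of $L$ not vanishing at $Q$, so $0\in A_1$; subadditivity of $\min A_k$ then forces $\min A_k=0$ for all $k$, yielding the ray of slope $0$. For the upper edge, $M_k:=\max A_k$ is superadditive, so $\mu:=\lim_k M_k/k=\sup_k M_k/k\leq d$. Intersecting $L^k$ with the osculating flag $\ell^{j}\KK[x,y]_{dk-j}$ and counting dimensions shows that $L^k$ always contains a form vanishing to order at least $dk-g$ at $Q$; hence $M_k\geq dk-g$, so $\mu=d$ and the upper edge lies within bounded distance $g$ of the slope-$d$ ray. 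Consequently $\mathrm{cone}_{\RR}(S)=\{(k,m):0\leq m\leq dk\}$ is rational polyhedral with rays of slopes $0$ and $d$.

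The forward direction is then immediate. If $S$ is finitely generated its cone is spanned by finitely many elements of $S$, so each extreme ray contains a nonzero element of $S$; the upper ray has slope $d$, forcing some generator to equal $(k_0,dk_0)$, that is $\ell^{dk_0}\in L^{k_0}$. Conversely, suppose $\ell^{dk_0}\in L^{k_0}$. Then both rays are hit, by $(1,0)$ and by $(k_0,dk_0)$, so $\val_Q$ has full rank, and it remains to prove finite generation of $S$. First I would show the group generated by $S$ is all of $\ZZ^2$: a sublattice of index $e\geq2$ would cap $|A_k|$ at roughly $dk/e+1$, contradicting $|A_k|=dk+1-g$ for large $k$. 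Thus $S$ sits inside the saturated semigroup $\widehat S=\mathrm{cone}_{\RR}(S)\cap\ZZ^2=\{(k,m):0\leq m\leq dk\}$, omitting exactly $g$ values at each large level.

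The hard part will be controlling the positions of these $g$ omitted values. The claim I need is that, measured from the two rays, the omissions stabilize as $k\to\infty$: equivalently, passing to the co-orders $B_k=\{dk-m:m\in A_k\}$, which again satisfy $B_{k'}+B_{k''}\subseteq B_{k'+k''}$ with $\min B_{k_0}=0$, the small integers missing from $B_k$ stabilize to the complement of a fixed numerical semigroup governing the branch of $X$ over $Q$. Granting this, $\widehat S\setminus S$ is a finite union of translates of the ray-semigroups, and such an $S$ is finitely generated; concretely, I expect a basis of $L$ together with $\ell^{dk_0}$ to form a Khovanskii basis, so that $S$ is generated in degrees at most $k_0$. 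The stabilization itself I would extract from the superadditivity of $B_k$, the uniform bound $\min B_k\leq g$, and the eventual constancy of $|B_k\cap[0,C]|$ for fixed $C$; this is the step that requires the most care.
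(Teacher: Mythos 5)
Your argument is being judged on its own merits: this paper never proves Theorem \ref{thm:kf-criterion}, it only quotes it from \cite[Theorem 3.5]{ilten-wrobel}, so there is no internal proof to compare against. The parts of your proposal that are complete are correct: the reformulation ($\ell^{dk}\in L^k$ iff $dk\in A_k$), the computation of the cone of the value semigroup, the forward implication via extreme rays, the full-rank part of the converse, and the lattice-saturation argument. You were also right to import the standing hypotheses of the source (basepoint-freeness and birationality onto a degree-$d$ image, giving $\dim L^k=dk+1-g$ for $k\gg0$); without them the statement is false as written, e.g.\ for $L=\langle x^4,x^2y^2,y^4\rangle$ and $Q=(1:1)$ the valuation $\val_Q$ is \kf{} (its value semigroup is $\{(k,m):0\leq m\leq 2k\}$), yet $(x-y)^{4k}\notin L^k$ for every $k$, since $L^k$ consists of polynomials in $x^2,y^2$.

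The genuine gap is the finite-generation half of the converse, which is the real content of the theorem: you reduce it to a ``stabilization'' claim that you never prove, and the ingredients you propose for proving it are partly false. The sets $B_k$ are not monotone in $k$; the only monotonicity available is $B_k\subseteq B_{k+k_0}$, coming from adding the value $(k_0,dk_0)$, so the sets $B_k\cap[0,C]$ stabilize only along each residue class modulo $k_0$, not in $k$. This is not a technicality: $0\in B_k$ iff $\ell^{dk}\in L^k$, and in the paper's own tacnode stratum (Table \ref{tab:loci}) the relevant condition is $(-\alpha)^{(n+2)k}=(-1)^k$; choosing parameters so that $-(-\alpha)^{n+2}$ is a primitive $k_0$-th root of unity with $k_0\geq 2$ gives $0\in B_k$ exactly when $k_0\mid k$. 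Hence there is no single ``fixed numerical semigroup'' that the $B_k$ stabilize to: one gets $k_0$ distinct stable sets $B^{(r)}_\infty$, $r\in\ZZ/k_0\ZZ$, and for $r\neq 0$ these are not semigroups at all, only modules over $B^{(0)}_\infty$. Any argument that ignores this periodicity cannot work. A second missing piece is hidden in your word ``equivalently'': stabilization of the small missing integers of $A_k$ and of $B_k$ is strictly weaker than your strong claim that all omissions stay at bounded distance from the two rays, because it does not by itself exclude gaps drifting off to infinity away from both edges. Ruling those out requires an actual splitting argument --- given $(k,m)$ with $m$ and $dk-m$ both large, write $k=k'+k''$ with $dk''\approx m$ and use cofiniteness of $A_\infty$ and of $B^{(r)}_\infty$ to produce $m'\in A_{k'}$, $m''\in A_{k''}$ with $m=m'+m''$ --- combined with the count $|A_k|=dk+1-g$. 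Once that is done, finite generation does follow (by induction, subtracting $(1,0)$ or $(k_0,dk_0)$), but the level bound on generators is governed by the stabilization thresholds, not by $k_0$; your parenthetical expectation that a basis of $L$ together with $\ell^{dk_0}$ is already a Khovanskii basis is unjustified for the same reason.
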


    \begin{example}
        \label{ex:val-rational-quintic}
        Let $L\subset \KK[x,y]_5$ be the subspace spanned by \[\{x^5-5x^4 y-y^5,x^3 y^2,x^2 y^3,xy^4\}.\]
        Then $S=R(L)=\KK[x^5-5x^4 y-y^5,x^3 y^2,x^2 y^3,xy^4]$ is the homogeneous coordinate ring of the rational quintic curve from Example \ref{ex:rational-quintic}.
        Using induction, one can show that for $k\in\NN$, a basis for $L^k$ is given by
        \begin{equation*}
            \{x^{5k}-5kx^{5k-1}y+(-1)^k y^{5k},x^{5k-2}y^2,\ldots,xy^{5k-1}\}.
        \end{equation*}
	See Example \ref{ex:cwsb}.

        Since $x^{5k}\notin L^k$ for $k\geq1$, we see by the previous theorem that $\val_{(0:1)}$ is not \kf{}. In fact, the value semigroup is generated by
        \begin{equation*}
            \{(5,0),(5,1),(5k,5k-3),(5k,5k-2)\,|\,k\in\NN\}.
        \end{equation*}

        On the other hand, $(x-y)^5\in L$ which means that the valuation $\val_{(1:1)}$ is \kf{}.
        By using the above basis one can show that the value semigroup of $\val_{(1:1)}$ is generated by $\{(5,0),(5,1),(5,2),(5,5)\}$.
    \end{example}
    \begin{rem}
    Theorem \ref{thm:kf-criterion} may be interpreted geometrically as follows. Let $\pi:\PP^d\dashrightarrow \PP^n$ be the projection with center $\PP(L^\perp)$, with $X=\Proj (R(L))\subset \PP^n$ and $C_d\subset \PP^d$ the rational normal curve. As long as $\PP(L^\perp)$ avoids $C_d$, the induced morphism $\pi:C_d\to X$ is just the normalization map. Then $\val_Q$ is \kf{} with respect to $R(L)$ if and only if there exists a hypersurface $H\subset \PP^n$ such that the intersection 
    \[\overline{\pi^{-1}(H)}\cap C_d\] is set-theoretically the point $Q\in \PP^1\cong C_d$. The degree of the hypersurface $H$ is exactly the natural number $k$ in the statement of Theorem \ref{thm:kf-criterion}.

    We may compare this with the criterion of \cite[Example 4.2]{anderson} (see also \cite[Proposition 14]{akl}). Let $X\subset \PP^n$ be a  smooth curve with homogeneous coordinate ring $R$, and fix a point $Q\in X$. This point determines a valuation $\val$ on $R$, see \cite{no2}.
    Then as noted in \cite[Example 4.2]{anderson}, the value semigroup $\val(R)$ is finitely generated \emph{by elements of degree one} if and only if there exists a hyperplane $H\subset\PP^n$ such that the intersection $X\cap H$ is set-theoretically equal to the point $Q$. More generally, the value semigroup $\val(R)$ is finitely generated if and only if there exists a hypersurface $H\subset\PP^n$ such that the intersection $X\cap H$ is set-theoretically equal to the point $Q$.
\end{rem}

    Theorem~\ref{thm:kf-criterion} hints at an algorithmic approach to determining if domains of the form $R(L)$ admit a Khovanskii-finite valuation:
    \begin{algorithm}[Naive algorithm]\label{alg:naive}
        Let $L$ and $Q$ be as in Theorem \ref{thm:kf-criterion}.
        To verify whether $\val_Q$ is \kf{} with respect to $R(L)$, test the containment $(\beta x-\alpha y)^{dk}\in L^k$ for all natural numbers $k$.
    \end{algorithm}
    The reason we call this algorithm naive is that it seems to work only if $\val_Q$ is \kf{} in which case we hit a value $k$ such that the containment holds and the algorithm stops.
    Otherwise, the process never terminates.
    Nonetheless, we will show that for rational curves with arithmetic genus 2, there is an upper bound for the exponents $k$ that we need to test (see Theorem \ref{thm:general-bound}).
    This means that the naive algorithm can be made effective.

    We conclude the section by showing that for rational curves of arithmetic genus two, we can reduce to the case of degree $n+2$ curves in $\PP^n$:
    \begin{proposition}
        \label{prop:same}
        Let $X$ and $X'$ be as in Proposition \ref{prop:project}. Then $\val_Q$ is \kf{} with respect to $X$ if and only if $\val_Q$ is \kf{} with respect to $X'$.
    \end{proposition}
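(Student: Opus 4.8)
The plan is to translate both Khovanskii-finiteness conditions into the combinatorial criterion of Theorem~\ref{thm:kf-criterion}, and then to show that the two linear systems $L, L' \subseteq \KK[x,y]_d$ defining $X$ and $X'$ have the same powers in all sufficiently high degrees. Write $R(L)$ and $R(L')$ for the homogeneous coordinate rings of $X \subseteq \PP^n$ and $X' \subseteq \PP^{n'}$, so that $\val_Q$ is defined on each via Construction~\ref{const:val-family}. Since $\pi':X'\to X$ is a linear projection restricting to an isomorphism, comparing the centers of projection gives an inclusion $L \subseteq L'$, and the induced pullback $\pi'^*:R(L)\to R(L')$ is exactly this inclusion of graded subrings of $\bigoplus_{k}\KK[x,y]_{dk}$. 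The crux of the argument, carried out in the third step, is the claim that
\[
L^k = (L')^k \qquad \text{for all } k \gg 0.
\]

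Granting this, the proposition follows quickly. By Theorem~\ref{thm:kf-criterion}, $\val_Q$ is \kf{} with respect to $R(L)$ (resp.\ $R(L')$) if and only if $(\beta x - \alpha y)^{dk}\in L^k$ (resp.\ $\in (L')^k$) for some $k$. If $(\beta x - \alpha y)^{dk}\in L^k$, then $L\subseteq L'$ immediately gives $(\beta x - \alpha y)^{dk}\in (L')^k$. Conversely, if $(\beta x - \alpha y)^{dk}\in (L')^k$, I raise to a large power: for $m\gg 0$ we have $(\beta x - \alpha y)^{dkm}=\bigl((\beta x - \alpha y)^{dk}\bigr)^m \in (L')^{km}=L^{km}$, using the equality of high powers. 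Applying Theorem~\ref{thm:kf-criterion} with exponent $km$ shows $\val_Q$ is \kf{} with respect to $R(L)$.

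To prove the claim, I would use that $\pi'$ is an isomorphism compatible with the polarizations. Because $\pi'$ is a linear projection, $(\pi')^*\CO_X(1)=\CO_{X'}(1)$, hence $(\pi')^*\CO_X(k)=\CO_{X'}(k)$ and $\pi'$ induces isomorphisms $H^0(X,\CO_X(k))\cong H^0(X',\CO_{X'}(k))$ for every $k$. Both $X$ and $X'$ are projections of the rational normal curve $C_d\cong\PP^1$, and since $\pi'$ intertwines the two normalization maps, pulling these global sections back to $\PP^1$ realizes $H^0(X,\CO_X(k))$ and $H^0(X',\CO_{X'}(k))$ as the \emph{same} subspace of $H^0(\PP^1,\CO(dk))=\KK[x,y]_{dk}$. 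On the other hand, Serre vanishing gives $R(L)_k=L^k=H^0(X,\CO_X(k))$ and $R(L')_k=(L')^k=H^0(X',\CO_{X'}(k))$ as subspaces of $\KK[x,y]_{dk}$ for all $k\gg 0$, since the coordinate ring of a projective variety agrees with its saturation in high degrees. Combining these identifications yields $L^k=(L')^k$ for $k\gg 0$.

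The main obstacle is the bookkeeping in the claim: one must verify that the abstract isomorphism $\pi'$ actually identifies the concrete subspaces $L^k,(L')^k\subseteq\KK[x,y]_{dk}$, and not merely abstractly isomorphic vector spaces. This is where compatibility of $\pi'$ with both the polarization $\CO(1)$ (so that $\pi'^*$ preserves the grading and the degree-$k$ pieces) and the common normalization $\PP^1$ (so that the two pullback embeddings into $\KK[x,y]_{dk}$ literally coincide) is essential. Once this is in place, Serre vanishing supplies the passage from ``isomorphic in high degree'' to the literal equality of high powers used in the reduction above.
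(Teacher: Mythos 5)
Your proof is correct, and its skeleton is the same as the paper's: both arguments reduce the statement, via Theorem~\ref{thm:kf-criterion}, to the claim that $L^k = (L')^k$ for all sufficiently large $k$, using the inclusion $L \subseteq L'$ (equivalently $S\subseteq S'$ inside $\KK[x,y]$). Where you differ is in how that claim is established. The paper does it with a dimension count: since $\pi'$ is an isomorphism, $X$ and $X'$ have the same arithmetic genus and the same degree, hence the same Hilbert polynomial; as Hilbert function equals Hilbert polynomial in large degrees, the graded pieces $S_k \subseteq S'_k$ are nested subspaces of equal finite dimension, hence equal. You instead identify both sides with section spaces: $(\pi')^*$ identifies $H^0(X,\CO_X(k))$ with $H^0(X',\CO_{X'}(k))$ compatibly with the common normalization $\PP^1$, and saturation (Serre vanishing for the ideal sheaf) gives $L^k = H^0(X,\CO_X(k))$ and $(L')^k = H^0(X',\CO_{X'}(k))$ inside $\KK[x,y]_{dk}$ for $k \gg 0$. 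Both mechanisms are sound; the paper's is more elementary (no sheaf cohomology needed), while yours makes the geometric content of the identification explicit. One place where your write-up is more careful than the paper's: in the converse direction the witness $k$ for $(\beta x - \alpha y)^{dk}\in (L')^k$ may be too small for the equality $L^k=(L')^k$ to apply, and one needs the power trick $(\beta x - \alpha y)^{dkm}\in ((L')^{k})^m\subseteq (L')^{km}=L^{km}$ for $m\gg 0$; the paper compresses this into ``follows directly from Theorem~\ref{thm:kf-criterion},'' whereas you spell it out.
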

    \begin{proof}
        Let $S$ and $S'$ be the homogeneous coordinate rings of $X$ and $X'$, respectively. We view both as subrings of $\KK[x,y]$, and have that $S\subset S'$. Since $X$ and $X'$ are isomorphic, they have the same arithmetic genus, and thus the same Hilbert polynomial. It follows that $S_k=S_k'$ for $k$ sufficiently large. The claim now follows directly from Theorem \ref{thm:kf-criterion}.
    \end{proof}

    \begin{rem}
        \label{rem:proj}
        Suppose that $X$ is any non-degenerate rational curve of arithmetic genus two. Using Propositions \ref{prop:project} and \ref{prop:same}, we may assume without loss of generality after replacing $X$ by $X'$ that $X\subset \PP^n$ is a curve of degree $d=n+2$. Indeed, to apply Proposition \ref{prop:project}, we need to start with a curve $X$ with $d\geq n+2$. This will always be the case, since if $d=n+1$, the resulting curve is obtained via projection from a point and can have at worst a simple node or cusp.
    \end{rem}

    \section{Rational curves of degree $n+2$ in $\PP^n$}
    In this section we study \kf{} valuations on non-degenerate rational curves of degree $n+2$ and arithmetic genus 2 in $\PP^n$. 
    To do this we consider the stratification of the space of such curves by singularity type.
    In each stratum we specify a set of \emph{representative curves} such that any curve in the stratum is projectively equivalent to a representative curve.
    Because admitting a \kf{} valuation depends only on the homogeneous coordinate ring, it is enough to study \kf{} valuations on the representative curves.
    The techniques of~\cite{buczynski-singular} lead to explicit parameterization for the curves in each stratum.
    We use this parameterization to describe the \kf{} valuations for each representative curve.

    We will always implicitly assume that $n\geq 3$, since plane quartics have arithmetic genus three.

    \subsection{Parameterization}\label{sec:param}
    Let $n\geq 3$ and $d=n+2$. As discussed in \S\ref{sec:curves}, a basepoint free projection $\pi$ of the rational normal curve yields a non-degenerate rational curve $X\subset \PP^n$ of degree $d$. Furthermore, every such curve arises this way. Hence, in order to parameterize all non-degenerate rational curves in $\PP^n$ of degree $d$, we vary the center of projection $\PP(U)$ among all possible $(d-n-1)$-dimensional linear subspaces of $\PP^d$ which do not intersect the rational normal curve $C_d$. Here $d-n-1=1$, so we are considering lines in $\PP^d$ as the center of projection.

Next we turn to the singularities that can occur on $X$ and the conditions that each singularity type imposes on the center of projection.
First we discuss osculating spaces and their associated multifiltrations, see \cite[\S3.1]{buczynski-singular}.
We fix a point $P\in C_d\subset \PP^d$. Let $W^i(P)\subset W=H^0(C_d,\CO_{C_d}(1))$ be the subspace of sections vanishing to order at least $i$ at $P$. Here, $W$ can be identified with the space of degree $d$ binary forms.
    For $0\leq i\leq d$, the $i$-th \emph{osculating space} of the rational normal curve $C_d$ at $P$ is defined to be the subspace $V^i(P)=\left( W^i(P) \right)^{\perp}$ of $V=W^*$.

    The decreasing flag of subspaces
    \[
        W=W^0(P)\supset W^1(P) \supset \cdots \supset W^d(P)=0,
    \]
    gives rise to an increasing flag of osculating spaces
    \[
        0=V^0(P) \subset V^1(P) \subset \cdots \subset V^d(P)=V.
    \]
    Combining osculating spaces at various points on $C_d$ leads to a multifiltration:
    \begin{definition}[{see~\cite[\S3.1]{buczynski-singular}}]
        Let $P_1,\ldots,P_r\in C_d$ be $r$ distinct points.
        These determine a $\ZZ^r$-\emph{graded multifiltration} $\cF^\bullet(P_1,\ldots,P_r)$ of $V=H^0(C_d,\CO_{C_d}(1))^*$ where for $\alpha=(\alpha_1,\ldots,\alpha_r)\in\ZZ^r_{\geq 0}$ we set
        \[
            \cF^{\alpha}(P_1,\ldots,P_r)=\left\langle V^{\alpha_1}(P_1),\ldots, V^{\alpha_r}(P_r) \right\rangle.
        \]
        Here, $\langle \bullet \rangle$ denotes the span as a $\KK$-vector space.
    \end{definition}

    \begin{table}
        \begin{tabular}{ l l l }
            \toprule
            \textbf{Singularity}&\textbf{Delta-invariant}& \textbf{Conditions on $\lm'$}\\
            \midrule
            Cusp &1& $\lm'(2)=\lm'(4)= 1 $  \\
	    \addlinespace
            Node &1& $\lm'(1,1)=\lm'(2,2)=1$ \\
            \midrule
            (3,4,5)-cusp &2& $\lm'(3)=\lm'(5)=2$ \\
	    \addlinespace
            (2,5)-cusp &2& $\lm'(2)=\lm'(3)=1,\ \lm'(4)=2$\\
	    \addlinespace
            Tacnode &2& $\lm'(1,1)=\lm'(1,2)=\lm'(2,1)=1$,\\
            && $\lm'(2,2)=2$\\
	    \addlinespace
            Cusp with smooth branch &2& $\lm'(2,1)=\lm'(4,2)=2$\\
	    \addlinespace
            Ordinary triple point &2& $\lm'(1,1,1)=\lm'(2,2,2)=2$\\
            \bottomrule
        \end{tabular}
	    \vspace{.5cm} 
        \caption[Possible singularities of rational curves of degree $n+2$ in $\PP^n$ for $n\geq 3$]{Singularities of $X$ and their $\lm'$ condition in Theorem~\ref{thm:singularities-lambda}. Notation taken from \protect\cite{buczynski-singular}.}
        \label{tab:singularity-types}
    \end{table}

    A major result of \cite{buczynski-singular} is that in our setting, the singularities occurring from projecting $C_d$ are determined by the intersection behavior of the center of projection with a multifiltration of osculating spaces.

    \begin{theorem}[{\cite[Theorems 1.2, 6.3]{buczynski-singular}}]
        \label{thm:singularities-lambda}
        Let $d=n+2$, and $\PP(U)\subset \PP^d$ be a line with  $\PP(U)\cap C_d=\emptyset$.
        Consider the projection $\pi:C_d\to X$ with center $\PP(U)$ as described above.
        \begin{enumerate}
            \item For a point $P\in C_d$, the image $\pi(P)$ is a singular point of $X$ if and only if $\PP(U)$ intersects a secant or tangent line of $C_d$ passing through $P$.
            \item The singularities that may occur on $X$ are listed in Table~\ref{tab:singularity-types}.
            If $Q\in X$ is a singular point with preimage $\{P_1,\ldots,P_r\}\subset C_d$, the singularity type of $Q$ is determined from the conditions of Table~\ref{tab:singularity-types} applied to the function
            \begin{align*}
                \lm':\ZZ^r_{\geq 0}&\longrightarrow \ZZ_{\geq 0}\\
                \alpha &\longmapsto \dim \left(U \cap \cF^{\alpha}(P_1,\ldots,P_r)\right).
            \end{align*}
    \item For any tuple of singularity types in Table~\ref{tab:singularity-types}, the space of all lines $\PP(U)$ parameterizing projections with exactly these singularities is non-empty if and only if the sum of the delta-invariants is at most 2.\label{item:three}
        \end{enumerate}
    \end{theorem}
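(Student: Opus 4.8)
The plan is to combine a local analysis of the projection $\pi$ for parts (1) and (2) with a global genus bound and an explicit construction for part (3); throughout, note that since $\PP(U)\cap C_d=\emptyset$, the induced map $C_d\cong\PP^1\to X$ is the normalization of $X$. For (1) and (2) I would work analytically near a fibre $\pi^{-1}(Q)=\{P_1,\ldots,P_r\}$. The map $\pi$ fails to be an isomorphism over $Q$ precisely when it fails to separate two of the $P_i$ (so that $\PP(U)$ meets the secant $\overline{P_iP_j}$) or to separate a tangent direction at some $P_i$ (so that $\PP(U)$ meets the embedded tangent line $\PP(V^2(P_i))$); this gives (1). For (2), the analytic type of $Q$ is controlled by the filtration induced on $\CO_{X,Q}\subset\prod_i\CO_{C_d,P_i}$, equivalently by how deeply $U$ meets the osculating spans at the $P_i$. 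Recording this through $\lm'(\alpha)=\dim\left(U\cap\cF^\alpha(P_1,\ldots,P_r)\right)$ and reading off the conductor and number of branches in each case reproduces Table~\ref{tab:singularity-types}. This local classification is the technical core and the step I expect to be hardest; it is carried out in \cite[\S\S 3--6]{buczynski-singular}.

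\emph{Necessity in (3).} Here I would invoke Castelnuovo's bound for the arithmetic genus of an integral non-degenerate curve in $\PP^n$. Writing $m=\lfloor (d-1)/(n-1)\rfloor$ and $\epsilon=d-1-m(n-1)$ for $d=n+2$, one finds $m=2,\ \epsilon=0$ if $n=3$ and $m=1,\ \epsilon=2$ if $n\geq 4$, so Castelnuovo's function $\binom{m}{2}(n-1)+m\epsilon$ equals $2$ in every case. Hence $p_a(X)\leq 2$. Since $X$ is rational its normalization is $\PP^1$, so $p_a(X)=\sum_Q\delta_Q$, the sum of the delta-invariants over the singular points; therefore this sum is at most $2$. (The same bound can also be extracted from the general position of the osculating spaces of $C_d$ together with $\dim U=2$, which is the viewpoint of \cite{buczynski-singular}.)

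\emph{Sufficiency in (3).} For each tuple in Table~\ref{tab:singularity-types} whose delta-invariants sum to at most $2$, I would exhibit an explicit center. Fix general points of $C_d$ as the desired preimages; by (2), each prescribed singularity amounts to requiring $\PP(U)$ to meet a specific secant, tangent, or osculating space of $C_d$. Because the delta-invariants sum to at most $\dim U=2$, these incidences can be realized simultaneously by a single line: one joins chosen points of the relevant spaces, taking for instance a line meeting both tangent lines $\PP(V^2(P_1))$ and $\PP(V^2(P_2))$ for a tacnode, or a line inside an osculating plane $\PP(V^3(P))$ and avoiding $P$ for a $(3,4,5)$-cusp, and similarly in the remaining cases. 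A dimension count in the Grassmannian $G(2,d+1)$, or direct inspection of the resulting values of $\lm'$, then shows that a general center in each construction avoids $C_d$ and produces exactly the prescribed singularities. Since ``exactly these singularities'' is an open condition, it suffices to verify one explicit line in each case; checking that no more degenerate singularity sneaks in is the main bookkeeping obstacle in this direction.
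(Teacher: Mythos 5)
First, a point of comparison: the paper does not prove this theorem at all. It is imported verbatim from \cite{buczynski-singular} (Theorems 1.2 and 6.3), so there is no internal proof to measure your sketch against. Your decision to defer parts (1) and (2) to \cite{buczynski-singular} is therefore consistent with how the paper itself treats the statement, and your necessity argument for part (3) is correct: Castelnuovo's bound does give $p_a(X)\leq 2$ for a non-degenerate degree $n+2$ curve in $\PP^n$, and for a rational curve $p_a(X)=\sum_Q \delta_Q$; this is the same viewpoint the paper invokes elsewhere (the remark after Theorem \ref{mainthm:3} cites $g\leq d-n$ from \cite{buczynski-singular}).

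There is, however, a concrete error in your sufficiency construction for part (3), precisely in the bookkeeping you flag as the main obstacle. For the tacnode you propose ``a line meeting both tangent lines $\PP(V^2(P_1))$ and $\PP(V^2(P_2))$.'' Such a line (the join of a point $u_1$ on the first tangent line and a point $u_2$ on the second) does satisfy $\lm'(2,2)=2$, but generically it does \emph{not} meet the secant line $\overline{P_1P_2}$, i.e.\ $\lm'(1,1)=0$, whereas Table \ref{tab:singularity-types} requires $\lm'(1,1)=1$ for a tacnode. Geometrically, such a center fails to identify $P_1$ with $P_2$; it merely ramifies the projection at each point, so the image has \emph{two cusps} at distinct points rather than a tacnode. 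The correct incidence, as in Example \ref{ex:tacnode-param}, is a line that lies in the span $\PP\left(\cF^{(2,2)}(P_1,P_2)\right)$ \emph{and} meets the secant line, without containing either tangent line direction (so that $\lm'(1,2)=\lm'(2,1)=1$); note that the resulting $\PP(U)$ in that example meets neither tangent line. Your $(3,4,5)$-cusp recipe is fine, but this slip shows the per-case verification of all the $\lm'$ values cannot be waved through. Separately, the closing appeal to ``exactly these singularities is an open condition'' is not right (it is only locally closed in the Grassmannian), though it is also unnecessary: non-emptiness needs only one correctly verified line per stratum.
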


    If $X\subset\PP^n$ is a non-degenerate rational curve of degree $n+2$ and arithmetic genus 2, the delta-invariants of the singularities of $X$ sum to 2.
    By the above theorem, the possible configurations of singularities that $X$ can have are either exactly one of the delta-invariant 2 singularities of Table~\ref{tab:singularity-types}, or two nodes, two cusps, or a node and a cusp.

    \begin{example}[Tacnode]\label{ex:tacnode-param}
        Let $X\subset\PP^n$ be a non-degenerate rational curve of degree $d=n+2$ with one singular point $Q\in X$ that is a tacnode.
	Part \ref{item:three} of the above theorem asserts that such a curve exists.
        We use the data in Table~\ref{tab:singularity-types} to characterize the center of projection $\pi:\PP^{d}\dashrightarrow\PP^n$ with $\pi(C_{d})=X$.
        This then gives an explicit parameterization for $X$.
        The preimage $\pi|_{C_{d}}^{-1}(Q)$ consists of two points $P_1,P_2\in C_{d}$.
        After acting by $\pgl{2}$ on $C_{d}$, we can assume $P_1=(1:0:\ldots:0)$ and $P_2=(0:\ldots:0:1)$.
        Table~\ref{tab:singularity-types} lists the following conditions on the function $\lm':\alpha\mapsto\dim\left(U\cap\cF^{\alpha}(P_1,P_2)\right)$,
        \[
            \lm'(1,1)=\lm'(1,2)=\lm'(2,1)=1,\quad\lm'(2,2)=2.
        \]
	We will use the ordered basis $\{e_0,\ldots,e_{n+2}\}=\{x^{n+2},x^{n+1}y,\ldots,y^{n+2}\}$ for \[W=H^0(C_d,\CO_{C_d}(1))=H^0(\PP^1,\CO_{\PP^1}(n+2))\] and the corresponding dual basis $\{e_0^*,\ldots,e_{n+2}^*\}$ for $V=W^*$.
        Then we have
        \begin{align*}
            &W^1(P_1)=\{e_1,\ldots,e_{n+2}\},\quad &W^1(P_2)&=\{e_0,\ldots,e_{n+1}\},\\
            &W^2(P_1)=\{e_2,\ldots,e_{n+2}\},\quad &W^2(P_2)&=\{e_0,\ldots,e_{n}\}.\\
        \end{align*}
        and dually
        \begin{align*}
            &V^1(P_1)=\{e_0^*\},\quad &V^1(P_2)&=\{e_{n+2}^*\},\\
            &V^2(P_1)=\{e_0^*,e_1^*\},\quad &V^2(P_2)&=\{e_{n+1}^*,e_{n+2}^*\}.\\
        \end{align*}
        The condition $\lm'(1,1)=1$ is equivalent to the condition that the line $\PP(U)\subset\PP(V)$, which is the center of the desired projection, intersects the secant line $P_1 P_2$.
        The element of $\pgl{2}$ we used above can be adjusted so that this point of intersection in $\PP(U)$ lifts to the point  $(1,0,\ldots,0,1)\in U$.
        Since $U$ is contained in $\cF^{(2,2)}=\langle V^2(P_1),V^2(P_2) \rangle$,
$U$ must be the span of $(1,0,\ldots,0,1)$ and a point of the form $(z_1,z_2,0,\ldots,0,z_3,z_4)$ for some $z_1,z_2,z_3,z_4 \in\KK$.
        The conditions $\lm'(1,2)=\lm'(2,1)=1$ imply that $z_2$ and $z_3$ are nonzero.
        After row-reducing and relabeling we can express $U$ as
        \[
            U=\mathrm{span}\{(1,0,\ldots,0,1),(0,a,0,\ldots,0,1,b)\}\quad(a\neq 0).
        \]
        By computing a basis for $L=U^\perp \subset W$, we obtain the parameterization
        \begin{align*}
            X=\Big\{(x^{n+2}+bxy^{n+1}-y^{n+2}:x^{n+1}y-axy^{n+1}:\\
            \qquad x^{n}y^2:\ldots:x^2 y^{n})\in \PP^n \,\Big|(x:y)\in\PP^1 \Big\}\quad (a\neq0),
        \end{align*}
        with coordinate ring  $R(L)=\bigoplus_{k\geq0}L^k$.
        This is a family of curves parameterized by an open subset of $\A^2$ $(a,b\in\KK, a\neq 0)$.
         Any non-degenerate rational curve of degree $n+2$ in $\PP^n$ having a tacnode is projectively equivalent to a curve in this family.
    \end{example}

    \begin{rem}
        The family obtained in the previous example contains redundant curves, i.e.\ curves that are projectively equivalent.
	For example let $\zeta$ be a primitive $(n+2)$-th root of unity. For every $a,b\in\KK$, the two curves corresponding to parameter values $(a,b)$ and $(a',b')=(a\zeta^2,b\zeta)$ are projectively equivalent.
        This is due to the fact that the $\pgl{2}$ element we used to move points on $C_d$ is not unique.
        Since our goal is to study Khovanskii-finiteness on these curves, this redundancy is not important.
    \end{rem}

    \begin{example}[Two nodes]\label{ex:two-nodes-param}
        Let $X\subset\PP^n$ be a rational curve of degree $d=n+2$ having two nodal singularities $Q,Q'\in X$.
        Assume $\pi:\PP^{d}\dashrightarrow\PP^n$ is the projection with $\pi(C_d)=X$.
        This singularity configuration emerges if the center of projection intersects two secant lines of the rational normal curve $C_d$.
        Hence there are four points $P_1,P_2,P'_1,P'_2\in C_d$ such that $\pi|_{C_d}^{-1}(Q)=\{P_1,P_2\}$ and $\pi|_{C_d}^{-1}(Q')=\{P'_1,P'_2\}$.
        If $P_i=P'_j$ for some $1\leq i,j\leq 2$ then all four points map to $Q=Q'$ and we end up with an ordinary triple point.
        Thus $P_1,P_2,P'_1,P'_2$ are distinct.

        After acting by $\pgl{2}$ we may assume that $P_1=(1:0:\ldots:0),P_2=(0:\ldots:0:1)$ and $P'_1=(1:1:\ldots:1)$.
        The last point $P'_2$ will be of the form $(1:c:c^2:\ldots:c^{n+2})$ for some $c\in\KK$ with $c\neq 0,1$.
        Using the same notation and ordered bases as in the previous example for $W$ and $V=W^*$, we can express $U\subset V$ as
        \[
            U=\mathrm{span}\{(1,0,\ldots,0,a),(1+b,c+b,c^2+b,\ldots,c^{n+2}+b)\}\quad(c\neq0,1),
        \]
        for some $a,b\in\KK$.
        It is straightforward to show that $a,b\neq0$ if and only if the center of projection $\PP(U)$ does not meet $C_d$.

        Similar to the previous example, by computing a basis for the orthogonal complement of $U$ we get an explicit parameterization for the curve $X$.
        If $b+c\neq0$ we have
        \begin{multline}\label{eq:paramTwoNodesCase1}
        C=\Big\{
        (-ax^{n+2}-\frac{c^{n+2}-ab-a+b}{b+c}x^{n+1}y+y^{n+2}:-\frac{b+c^2}{b+c}x^{n+1}y+x^n y^2: \\
            -\frac{b+c^3}{b+c} x^{n+1}y+x^{n-1}y^3:\ldots: -\frac{b+c^{n+1}}{b+c}x^{n+1}y+xy^{n+1})\in \PP^n \,\Big|(x:y)\in\PP^1
        \Big\}.
        \end{multline}
        If $b+c=0$ we end up with
        \begin{multline}\label{eq:paramTwoNodesCase2}
        C=\Big\{
        (-ax^{n+2}-\frac{c^{n+2}+ac-a-c}{c^2-c}x^n y^2+y^{n+2}:x^{n+1} y: \\
            -\frac{c^3-c}{c^2-c} x^n y^2+x^{n-1}y^3:\ldots:-\frac{c^{n+1}-c}{c^2-c}x^n y^2+xy^{n+1})\in \PP^n \,\Big|(x:y)\in\PP^1\Big\}.
        \end{multline}
        In this way, we obtain a family of curves parameterized by an open subset of $\A^3$ (parameters $a,b,c$ with conditions $a,b\neq0,\,c\neq0,1$) which is a family of representative curves for non-degenerate rational curves of degree $n+2$ in $\PP^n$ having two nodes.
    \end{example}

    The method of the previous two examples can be used to write down explicit parameterizations for all singularity configurations that can arise on non-degenerate rational curves of degree $n+2$ in $\PP^n$ having arithmetic genus 2.
    We summarize the result of such computations in Table~\ref{tab:param}.

	    \begin{table}
    \begin{adjustwidth}{-2cm}{-2cm}
\centering
            \begin{tabular}{ l l l }
                \toprule
		\bf \makecell{ Singularity\\Type}  & \bf Parameters & \makecell{\bf Curve Parameterization} \\
                \midrule
\addlinespace
                Tacnode &  \makecell{$a,b$ \\ $a\neq0$ } &
                \begin{tabular}{l}
                    $(x^{n+2}+bxy^{n+1}-y^{n+2}:x^{n+1}y-axy^{n+1}:$\\$x^{n}y^2:\ldots:x^2 y^{n})$\\
                \end{tabular}\\

\addlinespace
\midrule
\addlinespace
                (3,4,5)-cusp &  $a$ &
                \makecell{
                    $(x^{n+2}+ax^n y^2:x^{n-1}y^3:x^{n-2}y^4:\ldots:y^{n+2})$\\
                }\\

\addlinespace
\midrule
\addlinespace

                (2,5)-cusp &  $a,b$ &
                \makecell{
                    $(x^{n+2}-ax^{n-1}y^3:x^n y^2-bx^{n-1}y^3:x^{n-2}y^4:\ldots:y^{n+2})$\\
                }\\

\addlinespace
\midrule
\addlinespace

                \makecell{Cusp with\\ smooth branch} &  $a$ &
                \makecell{
                    $(x^{n+2}-ax^{n+1} y-y^{n+2}:x^n y^2:x^{n-1} y^3:\ldots:xy^{n+1})$\\
                }\\

\addlinespace
\midrule
\addlinespace

                \makecell{Ordinary\\ triple point} &   \makecell{$a,b$ \\$a\neq0$\\ $a+b\neq-1$} &
                \makecell{
                    $(x^{n+2}+bxy^{n+1}+ay^{n+2}:x^{n+1} y-xy^{n+1}:$\\$x^n y^2-x y^{n+1}:\ldots:x^2 y^n-xy^{n+1})$\\
                }\\

\addlinespace
\midrule
\addlinespace

                Two cusps &
                \makecell{$a,b$} &
                \makecell{
                    $(x^{n+2}-ax^{n+1}y:x^n y^2:\ldots:x^2 y^n:-bxy^{n+1}+y^{n+2})$\\
                }\\

\addlinespace
\midrule
\addlinespace

                \makecell{One cusp,\\ one node} &  \makecell{$a,b$ \\ $b+1\neq0$} &
                \makecell{
                    $(x^{n+2}-ax^{n+1}y+(a-1)xy^{n+1}:x^n y^2-xy^{n+1}:\ldots:$\\$x^2 y^n-xy^{n+1}:bxy^{n+1}+y^{n+2})$\\
                }\\

\addlinespace
\midrule
\addlinespace
                Two nodes &  \makecell{$a,b,c$ \\ $a\neq0$\\  $b\neq 0$ \\ $c\neq0,1$} &
                \makecell{Equation\ \eqref{eq:paramTwoNodesCase1} if $b+c\neq0$, \\ Equation\ \eqref{eq:paramTwoNodesCase2} if $b+c=0$.}\\
\addlinespace
\bottomrule
	\end{tabular}
	    \vspace{.5cm} 
	    \caption{Parameterization of the representative curves of non-degenerate rational curves of degree $n+2$ in $\PP^n$ having arithmetic genus 2 ($n\geq3$).}   \label{tab:param}
\end{adjustwidth}
        \end{table}

    \subsection{Khovanskii-finiteness}\label{sec:kfinite}
    In \S\ref{sec:param} we grouped together non-degenerate rational curves of degree $d=n+2$ in $\PP^n$ having arithmetic genus 2 based on their singularity configurations.
    In each family, we parameterized a set of representative curves in Table \ref{tab:param}.
    Now in each representative family, we derive explicit conditions for valuations to be \kf{}.

    \begin{theorem}
        \label{mainthm:1}
        Let $n\geq3$ and consider the stratification of the space of all non-degenerate rational curves of degree $d=n+2$ in $\PP^n$ having arithmetic genus 2 based on singularity type.
        \begin{enumerate}
		\item For each of the eight strata, explicit parameterizations for a family of representative curves are listed in Table~\ref{tab:param}.\label{part:uno}
		\item For each curve in the eight representative families in \ref{part:uno}, the valuations $\val_Q$ ($Q\in\PP^1$) that are \kf{} are listed in Table~\ref{tab:loci}.\label{part:dos}
        \end{enumerate}
    \end{theorem}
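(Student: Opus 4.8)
The plan is to verify the two parts separately, with Part \ref{part:uno} already essentially established and Part \ref{part:dos} requiring the real work. For Part \ref{part:uno}, I would note that the parameterizations in Table~\ref{tab:param} are obtained exactly as in Examples~\ref{ex:tacnode-param} and \ref{ex:two-nodes-param}: for each singularity type, use Theorem~\ref{thm:singularities-lambda} to translate the $\lm'$-conditions from Table~\ref{tab:singularity-types} into incidence conditions on the center of projection $\PP(U)$ with osculating spaces at the preimages of the singular points. After normalizing the preimage points via a $\pgl{2}$ action and row-reducing, one reads off a basis for $U$, and then computes $L=U^\perp\subset\KK[x,y]_{n+2}$ to get the parameterization of $X=\Proj R(L)$. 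Part~\ref{item:three} of Theorem~\ref{thm:singularities-lambda} guarantees each stratum is non-empty, and the classification of possible configurations (one delta-invariant-2 singularity, or two delta-invariant-1 singularities) accounts for exactly the eight strata. This part is bookkeeping: one repeats the example computations seven more times.

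For Part~\ref{part:dos}, the key tool is Theorem~\ref{thm:kf-criterion}: for a fixed $L$ and $Q=(\alpha:\beta)$, the valuation $\val_Q$ is \kf{} if and only if $(\beta x-\alpha y)^{(n+2)k}\in L^k$ for some $k\in\NN$. So the plan is, stratum by stratum and curve by curve within each representative family, to determine for which $Q$ this containment can hold. First I would reduce the set of candidate points $Q$: by the geometric interpretation of Theorem~\ref{thm:kf-criterion}, a necessary condition is that the full preimage under $\pi$ of some hypersurface meets $C_d$ set-theoretically only at $Q$; heuristically this forces $Q$ to be one of the finitely many distinguished points, namely the preimages $P_i$ of the singularities or points tied to the monomial structure of $L$. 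For a generic point $Q$ one expects $\val_Q$ to fail to be \kf{}, so the candidate list is short. Then, for each candidate $Q$, I would compute a basis for $L^k$ as a function of $k$ — precisely the inductive computation illustrated in Example~\ref{ex:val-rational-quintic}, where $L^k$ inherits a predictable monomial shape — and check directly whether the pure power $(\beta x-\alpha y)^{(n+2)k}$ lies in $L^k$, extracting the resulting polynomial conditions on the parameters $(a,b)$ or $(a,b,c)$.

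The main obstacle is the inductive description of $L^k$ and the containment test within it. Unlike the single example worked out in the text, here $L$ depends on parameters, so the basis of $L^k$ and the linear-algebra condition $(\beta x-\alpha y)^{(n+2)k}\in L^k$ become families of polynomial identities in the parameters, indexed by $k$. The difficulty is twofold: establishing the correct closed form for a spanning set of $L^k$ for all $k$ (which requires an induction whose base case and propagation must be checked against the specific generators in each row of Table~\ref{tab:param}), and then solving the resulting containment as a condition on the parameters, which a priori involves infinitely many $k$. I would handle the containment by expanding $(\beta x-\alpha y)^{(n+2)k}$ in the monomial basis and matching coefficients against the span of $L^k$; the distinguished generator of $L$ (the one combining the two extreme monomials $x^{n+2}$ and $y^{n+2}$, as in the tacnode and triple-point rows) is what obstructs containment, so the analysis concentrates on tracking that generator's contribution. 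The outcome is, for each stratum, a description of the \kf{} locus as a countable union of Zariski-closed conditions on the parameters, which is what Table~\ref{tab:loci} records; the fact that infinitely many $k$ remain to be checked is exactly the gap addressed later by Theorem~\ref{thm:general-bound}, so for this theorem it suffices to exhibit the conditions without yet proving their finiteness.
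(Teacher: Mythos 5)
Your plan coincides with the paper's proof in outline: Part \ref{part:uno} is established by repeating the computations of Examples \ref{ex:tacnode-param} and \ref{ex:two-nodes-param} for the remaining strata, and Part \ref{part:dos} rests on Theorem \ref{thm:kf-criterion} combined with closed-form bases for $L^k$ (the paper records these in Table \ref{tab:param2}; note that to close the induction giving these bases one needs a dimension count, which the paper obtains from $3$-regularity and the Hilbert polynomial as in Example \ref{ex:cwsb} --- your sketch omits this ingredient, though it is a standard fill-in). However, one step of your plan is genuinely wrong and would destroy exactly what Part \ref{part:dos} asserts, namely completeness of the list in Table \ref{tab:loci}.

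The gap is your preliminary ``reduction of the candidate points $Q$'': you claim Khovanskii-finiteness forces $Q$ to lie in a short list consisting of the preimages $P_i$ of the singular points ``or points tied to the monomial structure of $L$,'' and you then propose to test the containment $(\beta x-\alpha y)^{(n+2)k}\in L^k$ only for those candidates. This heuristic is false. In the tacnode stratum the singular preimages are $P_1=(1:0)$ and $P_2=(0:1)$, yet Table \ref{tab:loci} shows the \kf{} points are the $(\alpha:1)$ with $\alpha\neq 0$ a root of the quadratic $(n+2)\alpha^2a+\alpha b-(n+2)=0$ subject to a root-of-unity condition; so the \kf{} points in this stratum are \emph{never} singular preimages. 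Likewise, for a cusp with smooth branch the only possible \kf{} point is $\bigl(\frac{a}{n+2}:1\bigr)$, and for two cusps the third entry is $\bigl(\frac{a}{n+2}:1\bigr)$ when $ab=(n+2)^2$; these depend on the parameters and are not distinguished points of the parameterization in any a priori sense, and the phrase ``tied to the monomial structure of $L$'' has no precise content that would let you find them in advance. Since for a fixed curve only these points carry \kf{} valuations, a proof that starts by discarding all $Q$ outside your candidate list can never certify the table. The fix is simply to delete the reduction: treat $(\alpha:\beta)$ as unknowns on the same footing as the parameters, expand $(\beta x-\alpha y)^{(n+2)k}$ against the basis of $L^k$, and let the coefficient-matching itself cut out simultaneously the admissible $Q$ and the conditions on the parameters for each $k\geq 1$. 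That is the paper's Step 3, and it is what the second half of your own plan becomes once the spurious first step is removed; your closing remark that the quantification over $k$ need not be resolved here (being deferred to Theorem \ref{thm:general-bound}) is correct and agrees with the paper.
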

    \begin{table}
        \centering
        \begin{tabular}{ l l }
            \toprule
            \bf \makecell{Singularity\\Type} & \bf \makecell{$(\alpha:\beta)\in\PP^1$ with $\val_{(\alpha:\beta)}$  \kf{}}\\
             \midrule 
	     \addlinespace
	     Tacnode &
            \makecell{$(\alpha:1)\in\PP^1$ if for some $k$\\  $(-\alpha)^{(n+2)k}=(-1)^k$, \\
                $(n+2)\alpha^2 a+\alpha b-(n+2)=0$.}\\
            \addlinespace
	    \midrule
	    \addlinespace
		(3,4,5)-cusp &
            \makecell{$(1:0)\in\PP^1$ for any $a$,\\
                $(0:1)\in\PP^1$ if $a=0$.} \\
            \addlinespace
	    \midrule
	    \addlinespace
            (2,5)-cusp &
            \makecell{$(1:0)\in\PP^1$ for any $a,b$,\\
                $(0:1)\in\PP^1$ if $a=0$.} \\
            \addlinespace
	    \midrule
	    \addlinespace
            \makecell{Cusp with\\ smooth branch} &
            \makecell{$(\alpha:1)\in\PP^1$ if for some $k$ \\
                $\alpha^{(n+2)k}=1$,\\
                $(n+2)\alpha=a$.} \\
            \addlinespace
	    \midrule
	    \addlinespace
            \makecell{Ordinary\\ triple point} &
            \makecell{$(\alpha:-1)\in\PP^1$ if for some $k$\\
                $(\alpha+1)^{(n+2)k}=(1+b+a)^k$,\\
                $\alpha^{(n+2)k}=a^k$.}\\
            \addlinespace
	    \midrule
	    \addlinespace
            Two cusps &
            \makecell{$(0:1)\in\PP^1$ if $a=0$, \\ $(1:0)\in\PP^1$ if $b=0$, \\ $(\frac{a}{n+2}:1)$ if $ab=(n+2)^2$.}\\
            \addlinespace
	    \midrule
	    \addlinespace
            \makecell{One cusp,\\ one node} &
            \makecell{$(1:0)\in\PP^1$ if for some $k$, $(b+1)^k=1$,\\ $(\frac{a}{n+2}:1)\in\PP^1$ if for some $k$,\\ $(a-(n+2))^{(n+2)k}=a^{(n+2)k}(b+1)^k$.}\\
            \addlinespace
	    \midrule
	    \addlinespace
            Two nodes &
            \makecell{ $(-1:\beta)\in\PP^1$ if for some $k$\\
                $ (-a)^k=\beta^{(n+2)k}$,\\
                $b^k(\beta+1)^{(n+2)k}=(-1)^k(\beta+c)^{(n+2)k}$.}\\
            \addlinespace
	    \bottomrule
        \end{tabular}
	\vspace{.5cm}
        \caption{Khovanskii-finiteness of non-degenerate rational curves of degree $n+2$ in $\PP^n$ having arithmetic genus 2 ($n\geq3$)}
        \label{tab:loci}
    \end{table}
    \begin{proof}
	    We proved part \ref{part:uno} for curves with a tacnode and curves with two nodes in Examples~\ref{ex:tacnode-param} and \ref{ex:two-nodes-param}.
        The parameterizations for the other cases are obtained in the same manner.

	For part \ref{part:dos}, we outline our general argument here.
        The proof is done by following these steps for each singularity family.
        \begin{step}
            \item Given a singularity configuration, we look up the parameterization for the set of representative curves in Table~\ref{tab:param}.
		    This parameterization involves a number of parameters (e.g.\ two parameters $a,b$ for curves with a tacnode).\label{step:1}
            \item The coordinate ring of a curve in this family is of the form $R(L)=\bigoplus_{k\geq0}L^k$ where $L\subset\KK[x,y]_{n+2}$ can be read off the corresponding parameterization.
            In this step we compute a basis of $L^k$ for $k\geq1$.
            We have recorded these bases in Table~\ref{tab:param2}.
            \item To determine which valuations $\val_Q$ from Construction~\ref{const:val-family} are \kf{} for a given curve, we use Theorem~\ref{thm:kf-criterion}. Using the explicit bases for $L^k$ from the previous step, we determine all points $(\alpha:\beta)\in\PP^1$ such that $\val_{(\alpha:\beta)}$ is \kf{} with respect to $R(L)$.
		    This yields a system of equations in terms of $\alpha,\beta$ and the parameters for each $k\geq1$.
	    A solution to this system for a specific $k$ gives a \kf{} valuation for a single curve.
        \end{step}
    \end{proof}

        \begin{table}
		\begin{adjustwidth}{-2cm}{-2cm}
       \centering
			\begin{tabular}{ l l }
                \toprule
                \bf \makecell{Singularity\\Type} &  \bf Basis for $L^k$ \\
\midrule\addlinespace
                Tacnode & 
                \makecell{
                    $\{x^{(n+2)k}-(-1)^k kbx+(-1)^k, x^{(n+2)k-1}+(-1)^k ax,x^{(n+2)k-2},\ldots, x^2\}$
	    }\\

\addlinespace\midrule\addlinespace

                (3,4,5)-cusp & 
                \makecell{
                    $\{x^{(n+2)k}+kax^{(n+2)k-2},x^{(n+2)k-3},\ldots,x,1\}$
                }\\

\addlinespace\midrule\addlinespace

                (2,5)-cusp &
                \makecell{
                    $\{x^{(n+2)k}-kax^{(n+2)k-3},x^{(n+2)k-2}-bx^{(n+2)k-3},x^{(n+2)k-4},\ldots,x,1\}$
                }\\

\addlinespace\midrule\addlinespace

                \makecell{Cusp with\\ smooth branch} & 
                \makecell{
                    $\{x^{(n+2)k}-kax^{(n+2)k-1}+(-1)^k,x^{(n+2)k-2},x^{(n+2)k-3},\ldots,x\}$
                }\\

\addlinespace\midrule\addlinespace

                \makecell{Ordinary\\ triple point} & 
                \makecell{
                    $\{x^{(n+2)k}+\left((1+b+a)^k-(1+a^k)\right)x+a^k,x^{(n+2)k-1}-x,\ldots,x^3-x,x^2-x\}$
                }\\

\addlinespace\midrule\addlinespace

                Two cusps &
                \makecell{
                    $\{x^{(n+2)k}-kax^{(n+2)k-1},x^{(n+2)k-2},\ldots,x^2,-kbx+1\}$
                }\\

\addlinespace\midrule\addlinespace

                \makecell{One cusp,\\ one node} & 
                \makecell{
                    $\{x^{(n+2)k}-kax^{(n+2)k-1}+(ka-1)x, x^{(n+2)k-2} -x,\ldots,x^2-x,((b+1)^k-1)x+1\}$
                }\\

\addlinespace\midrule\addlinespace
                Two nodes &
                \makecell{
                    $\left\{ (-a)^k x^{dk}-\frac{b^k-a^k-(-1)^k (c^{dk}-a^k b^k)}{b^k-(-1)^k c}x^{dk-1}+1,-\frac{b^k-(-1)^k c^i}{b^k-(-1)^k c}x^{dk-1}+x^{dk-i} \big| 2\leq i\leq dk-1 \right\},$\\
                    or\\
                    $\left\{ (-a)^k x^{dk}-\frac{b^k-a^k-(-1)^k (c^{dk}-a^k b^k)}{b^k-(-1)^k c^2}x^{dk-2}+1,-\frac{b^k-(-1)^k c^i}{b^k-(-1)^k c^2}x^{dk-2}+x^{dk-i} \big| 1\leq i\leq dk-1 , i\neq2\right\},$\\
                where $d=n+2$ (either works if the denominators do not vanish)
                }\\
\addlinespace\bottomrule
            \end{tabular}
    \end{adjustwidth}
	    \vspace{.5cm} 
            \caption{
		    Bases for $L^k$ for rational curves in Table \ref{tab:param}. To save space, we have set $y=1$.}
	    \label{tab:param2}
        \end{table}

        We illustrate the steps of the above proof in the next example.
	\begin{example}[Cusp with smooth branch]\label{ex:cwsb}
            Consider the family of rational curves of degree $n+2$ in $\PP^n$ having a cusp with smooth branch.
            According to Table~\ref{tab:param}, the set of representative curves is a one dimensional family. Each curve $X$ has a coordinate ring $R(L)=\bigoplus_{k\geq 0}L^k$ where
            \[
                L=\mathrm{span}\{x^{n+2}-ax^{n+1} y-y^{n+2},x^n y^2,x^{n-1} y^3,\ldots,xy^{n+1}\}\subset\KK[x,y]_{n+2},
            \]
            for some $a\in\A^1$.
            We claim that a basis for $L^k$ is given by
	    \begin{align*}
                \mathcal{B}_k=\{x^{(n+2)k}-kax^{(n+2)k-1}y+(-1)^k y^{n+2},\\
	\qquad	x^{(n+2)k-2}y^2,x^{(n+2)k-3}y^3,\ldots,xy^{(n+2)k-1}\}.
	\end{align*}
            To prove this we induct on $k$.
            The base case obviously holds. Assume the claim holds for some $k\geq1$.
            The subspace $L^{k+1}$ is the span of all elements of the form $f\cdot f'$ where $f\in\mathcal{B}_1$ and $f'\in\mathcal{B}_k$.
            It is easy to check that all such products are in $\mathcal{B}_{k+1}$, that is $L^{k+1}\subset\mathrm{span}\left( \mathcal{B}_{k+1} \right)$.
            It is thus enough to show that $L^{k+1}$ and $\mathrm{span}(\mathcal{B}_{k+1})$ have the same dimension.
            Since elements of $\mathcal{B}_{k}$ are $\KK$-linearly independent, we have $\dim_{\KK}\left( \mathrm{span}(\mathcal{B}_{k}) \right)=(n+2)k-1$ for all $k\geq1$.
            On the other hand, we know that a non-degenerate rational curve of degree $d$ in $\PP^n$ having arithmetic genus $d-n$ is 3-regular~\cite[Theorem 1]{regularity}.
            In our case, this means that for $k\geq2$ we have
            \[
                \dim_{\KK}L^k= h^0\left(\CO_X(k)\right)=h_X(k),
            \]
            where $h_X(k)=(n+2)k-1$ is the Hilbert polynomial of $X$.
            The claim for the basis of $L^k$ is now proved.

            Next we determine for which points $(\alpha:\beta)\in\PP^1$, the condition $(\beta x-\alpha y)^{(n+2)k}\in L^k$ holds.
            Clearly we cannot have $\beta=0$, so we assume $\beta=1$.
	    Expanding this expression and using the basis for $L^k$ immediately gives
            \begin{align*}
                \begin{split}
                    \alpha^{(n+2)k}&=1,\\
                    (n+2)\alpha&=a.
                \end{split}
            \end{align*}
        \end{example}

	\begin{rem}\label{rem:locus}The information of Table \ref{tab:loci} allows us to describe the locus of curves in each stratum of representative curves that admit a \kf{} valuation. Indeed, as discussed following Construction \ref{const:val-family}, in testing Khovanskii-finiteness we only need to consider valuations of the form $\val_Q$. By consulting Table \ref{tab:loci}, we see that $(3,4,5)$-cusps and $(2,5)$-cusps are always \kf{}. In the case of two cusps, the \kf{} locus is a union of three curves in $\A^2$. In all the remaining cases, one obtains that the \kf{} locus is an infinite but countable union of proper subvarieties.
For a cusp and node, the equations are exactly the conditions on $a,b$ as written in the table as $k$ varies. 	For the cases of a tacnode or cusp with smooth branch, one obtains the equations by letting $\alpha$ range over all possible roots of unity.

The biggest challenge is in the case of an ordinary triple point or two nodes. Here, one may obtain equations for each fixed value of $k$ by eliminating $\alpha$ or $\beta$ from the system of equations. This is unfortunately not very explicit. Alternatively, by taking $k$-th roots, one may replace the system of equations in the case of an {\bf ordinary triple point} by the following systems:
\begin{align*}
	(\alpha+1)^{n+2}&=u(1+b+a)\\
	\alpha^{n+2}&=va
\end{align*}
where $u,v$ are roots of unity. Treating $u,v$ as indeterminates and eliminating $\alpha$, one obtains a single polynomial equation $H_{u,v}(a,b)=0$ in $a,b,u,v$. The \kf{} locus is contained in the union of the vanishing loci of the $H_{u,v}$ as we substitute in roots of unity for $u$ and $v$. In the special case $n=3$, one computes (e.g.~with Macaulay2 \cite{M2}) that
\[
	H_{u,v}=\left ( (u-v)a+ub+u-1\right)^5-625uva(a+b+1)I_{u,v}
\]
where
\[
	I_{u,v}=\left(ub+(u+\frac{3}{2}v)a+(u+\frac{3}{2})\right)^2-\frac{5}{4}(1+6va+v^2a^2).
\]
One may show that $H_{u,v}$ remains irreducible after substituting in any roots of unity for $u,v$, which shows that in the case $n=3$, the \kf{} locus is exactly the union of the curves $H_{u,v}=0$ as $u,v$ range over all roots of unity.

A similar analysis may be performed in the case of {\bf two nodes}.
Again by taking $k$-th roots, we obtain the systems
\begin{align*}
	au&=\beta^{n+2}\\
	bv(\beta+1)^{n+2}&=(\beta+c)^{n+2}
\end{align*}
where $u,v$ are roots of unity. Treating $u,v$ as indeterminates and eliminating $\beta$, one obtains a single polynomial equation $H'_{u,v}(a,b,c)=0$ in $a,b,c,u,v$. The \kf{} locus is contained in the union of the vanishing loci of the polynomials $H'_{u,v}(a,b,c)$ as we substitute in roots of unity for $u$ and $v$. Already in the case $n=3$, the polynomial $H'_{u,v}(a,b,c)$ is quite complicated, containing over $100$ terms.
	\end{rem}

	\begin{rem}
		Given a non-degenerate rational curve $X$ of degree $d=n+2$ in $\PP^n$ of arithmetic genus 2, Theorem \ref{mainthm:1} gives us a criterion for testing if a given valuation $\val_Q$ is \kf{}. In the case that $\val_Q$ is in fact \kf{}, we obtain a toric degeneration of $X$ by \cite[Theorem 1.1]{anderson}. The central fiber of this degeneration must be a projective toric curve of arithmetic genus two; the only possibilities are a rational curve with two simple cusps, a rational curve with a $(2,5)$-cusp, or a rational curve with a $(3,4,5)$-cusp.

		In general, the toric degeneration will not take place in $\PP^n$, but in some weighted projective space. The homogeneous coordinate ring for the central fiber of the degeneration is simply the semigroup ring of the value semigroup of $\val_Q$ on $R(L)$. In principal, one may determine this semigroup using the bases we list in Table \ref{tab:param2}, but in practice, this appears to be rather subtle.
	\end{rem}
    \subsection{Deciding Khovanskii-finiteness}\label{sec:bound}
    Algorithm~\ref{alg:naive} provides a non-effective procedure for deciding whether or not a valuation on the coordinate ring of a rational curve is \kf{}.
    The issue is that if the valuation is not \kf{} then the process never terminates and so we called it the \emph{naive algorithm}.
    The main result of this section is that for non-degenerate rational curves of arithmetic genus 2, there is an upper bound for the number of iterations needed in the naive algorithm.

    \begin{theorem}\label{thm:general-bound}
        Let $X\subset\PP^n$ be a non-degenerate rational curve of degree $d$ having arithmetic genus 2 with coordinate ring $R(L)=\bigoplus_{k\geq0}L^k$ where $L\subset\KK[x,y]_{d}$ is a subspace and let $(\alpha:\beta)\in\PP^1$.
        Assume further that $L$ is defined over a number field $F$ with $[F:\QQ]=\ell$.
        Then $\val_{(\alpha:\beta)}$ is \kf{} with respect to $R(L)$ if and only if there exists
	\[1\leq k\leq \max \left\{2(d-n-1),\left( \left(96d^3 \ell\right)^2+2 \right)^2\right\}\]
    such that $(\beta x-\alpha y)^{dk}\in L^k$.
    \end{theorem}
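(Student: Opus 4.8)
The plan is to treat the statement as a bound on the exponent $k$ appearing in the containment criterion of Theorem~\ref{thm:kf-criterion}: since $\val_{(\alpha:\beta)}$ is \kf{} with respect to $R(L)$ precisely when $(\beta x-\alpha y)^{dk}\in L^k$ for some $k$, and since the set of valid exponents is closed under addition (a product of powers lying in $L^{k_1}$ and $L^{k_2}$ lands in $L^{k_1+k_2}$), it suffices to bound the \emph{minimal} valid $k$. First I would reduce to the case $n=d-2$. By Proposition~\ref{prop:same}, $\val_{(\alpha:\beta)}$ is \kf{} with respect to $X$ if and only if it is \kf{} with respect to the isomorphic projection $X'\subset\PP^{d-2}$ of Proposition~\ref{prop:project}; here $L\subseteq L'$ and, once the Hilbert function of $X$ has stabilized, $L^k=(L')^k$ for all $k$ above an explicit threshold $T\le d-n-1$ governed by the regularity of $X$. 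If the minimal valid exponent $k'$ for $X'$ satisfies $k'\ge T$, it is also valid for $X$; otherwise the least multiple of $k'$ exceeding $T$ is valid for $X$ and is smaller than $2T\le 2(d-n-1)$. This produces the first term of the maximum and lets me assume $n=d-2$ from now on. I expect this reduction step to be routine.

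For $n=d-2$ the curve is one of the eight types of Table~\ref{tab:param}, and the key observation is that Table~\ref{tab:loci} presents the \kf{} condition in a uniform shape: there are at most two algebraic numbers $\gamma_1,\gamma_2$, obtained from $\alpha,\beta$ and the normal-form parameters by the explicit rational expressions recorded in the table, such that $\val_{(\alpha:\beta)}$ is \kf{} if and only if each $\gamma_i$ is a $k$-th root of unity for one common $k$; the remaining equations in each row are $k$-independent and merely cut out the admissible $Q$. Writing $m_i$ for the multiplicative order of $\gamma_i$, the minimal valid exponent is then exactly the least common multiple of $m_1$ and $m_2$, so the whole problem collapses to bounding these two orders.

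The plan is to bound each $m_i$ by the classical root-of-unity estimate. Using $\phi(m)\ge\sqrt{m}$ for $m\notin\{2,6\}$, a primitive $m$-th root of unity lying in a field of degree $D$ over $\QQ$ satisfies $\phi(m)\le D$ and hence $m\le D^2+2$. Thus, if both $\gamma_1,\gamma_2$ lie in a common field $K$ with $[K:\QQ]\le D$, then $m_1,m_2\le D^2+2$ and the minimal valid exponent is at most the least common multiple, hence at most $m_1m_2\le (D^2+2)^2$. Taking $D=96d^3\ell$ matches the second term of the stated maximum, and combining with the reduction above yields the full bound.

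The hard part will be the arithmetic input, namely producing a field $K$ containing $\gamma_1,\gamma_2$ with $[K:\QQ]\le 96d^3\ell$. Since the $\gamma_i$ are rational expressions in $\alpha,\beta$ and the normal-form parameters, and since these parameters are determined by the points of $\PP^1$ lying over $\sing(X)$ --- equivalently, by the points where the center of projection $\PP(L^\perp)$ meets the secant and tangent lines of the rational normal curve $C_d$ --- I would bound $[K:F]$ by $96d^3$ by writing down the incidence equations explicitly (vanishing of the appropriate maximal minors, of degree $O(d)$ in the relevant parameters) and applying a B\'ezout-type count in the bounded number of variables involved, the exponent $3$ and the constant $96$ coming from this estimate together with the low-degree equations of Table~\ref{tab:loci}. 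One subtlety I must address is that, for the strata with no $k$-independent constraint on the tested point (two nodes and the ordinary triple point), the coordinates of $Q$ genuinely enter the degree of $K$, so I would run the test with $F$ chosen to contain $\alpha,\beta$ as well as a defining set of coefficients of $L$; for the remaining strata the admissible $Q$ are already forced into a bounded extension of $F$, and a $Q$ outside it simply fails to be \kf{}. Carrying out this degree bookkeeping uniformly across the eight singularity types is the principal obstacle, while the remaining steps are straightforward applications of the root-of-unity estimate and the reduction to $n=d-2$.
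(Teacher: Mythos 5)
Your overall skeleton (reduce to $n=d-2$, pass to a representative curve, read each row of Table~\ref{tab:loci} as ``one or two explicit numbers must be $k$-th roots of unity,'' bound their orders via $\varphi(m)\ge\sqrt{m}$, and take an lcm) is the same as the paper's, and for the strata where Table~\ref{tab:loci} contains a $k$-independent equation (tacnode, cusp with smooth branch, one cusp and one node) your argument is essentially the paper's: that equation forces $\alpha$ into an extension of $F$ of degree at most $2$, and Lemma~\ref{lem:root-unity} finishes. The genuine gap is exactly at the point you flag as the ``principal obstacle,'' and neither of your proposed ways around it works. For the ordinary triple point and two nodes strata, the theorem quantifies over an \emph{arbitrary} point $(\alpha:\beta)\in\PP^1$: only $L$ is assumed defined over $F$, and the bound may depend only on $d$, $n$, and $\ell=[F:\QQ]$. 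Your fallback of ``running the test with $F$ chosen to contain $\alpha,\beta$'' proves a different, weaker statement, whose bound depends on $[\QQ(\alpha,\beta):\QQ]$ --- and that degree is unbounded, since for each $k$ the single equation $(\alpha+1)^{(n+2)k}=(1+b+a)^k$ already has solutions of degree growing with $k$.

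A B\'ezout-type count cannot rescue this either: for fixed $k$ the two equations $(\alpha+1)^{(n+2)k}=(1+b+a)^k$ and $\alpha^{(n+2)k}=a^k$ have degree growing with $k$, so elimination bounds the number and degrees of common solutions only in terms of $k$, which is precisely the quantity being bounded. The missing ingredient is the paper's essential arithmetic input, Ruppert's theorem \cite[Corollary 6]{ruppert}: after taking $(n+2)k$-th roots and eliminating $\alpha$, the simultaneous system becomes a single linear equation
\[
a^{\frac{1}{n+2}}z-(1+b+a)^{\frac{1}{n+2}}z'+1=0
\]
to be solved in \emph{roots of unity} $(z,z')$, and Ruppert's effective result on torsion points of $\GG_m^2$ lying on a line says every such solution has degree at most $4$ over the coefficient field $F'$, where $[F':F]\le (n+2)^2$. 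Only this yields the $k$-independent bound $[F(\alpha):F]\le 4(n+2)^2$ that feeds into Lemma~\ref{lem:root-unity} and, combined with the paper's other field bookkeeping (degree $24$ for a Galois closure containing $\pi^{-1}(\sing(X))$ as in Lemma~\ref{lemma:galois}, and degree $d$ for the $\pgl{2}$ normalization to a representative curve), produces the constant $96d^3\ell$. Without Ruppert's theorem or an equivalent Manin--Mumford-type statement for $\GG_m^2$, the two problematic strata cannot be handled, so the proposal as written does not prove the theorem.
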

    We will prove this theorem later in this section.
  
    \begin{rem}
	    As in \cite[Remark 3.6]{ilten-wrobel}, checking if $R(L)$ admits \emph{any} homogeneous Khovanskii-finite valuation amounts to testing if the image of the $kd$-power map $\PP^1=\PP(\KK[x,y]_1)\to\PP^{kd}=\PP(\KK[x,y]_{kd})$ intersects $L^k$ for some $k$.  By the above theorem, it suffices to check this for the range of $k$ specified in the theorem.
    \end{rem}
    \begin{rem}
	    We saw in Remark \ref{rem:locus} that in general, the locus of degree $d$ rational curves of arithmetic genus $g$ in $\PP^n$ admitting a Khovanskii-finite valuation is not a Zariski-closed set, but rather a countable union of Zariski-closed sets. However, by Theorem \ref{thm:general-bound}, we see that for any number field $F$, the set of $F$-rational points in this locus \emph{does} form a Zariski-closed set.
    \end{rem}

    Before proving the theorem in general, we determine similar bounds for the degree $n+2$ representative curves in $\PP^n$ from Table~\ref{tab:param}.
    The following preliminary result regarding roots of unity will be used repeatedly.

    \begin{lemma}
        \label{lem:root-unity}
        Let $\zeta$ be an algebraic number of degree $\ell$ over $\QQ$.
        Then $\zeta$ is a root of unity if and only if $1\in\{\zeta,\zeta^2,\ldots,\zeta^{\ell^2+2}\}$.
    \end{lemma}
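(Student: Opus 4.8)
The plan is to prove Lemma~\ref{lem:root-unity}, which asserts that an algebraic number $\zeta$ of degree $\ell$ over $\QQ$ is a root of unity if and only if $1 \in \{\zeta, \zeta^2, \ldots, \zeta^{\ell^2+2}\}$. The backward direction is immediate: if $\zeta^m = 1$ for some $1 \leq m \leq \ell^2+2$, then $\zeta$ is by definition a root of unity. So the content is entirely in the forward direction. The key obstacle is that a priori a root of unity of degree $\ell$ could have arbitrarily large \emph{order}, and we must bound that order by a polynomial (here quadratic) in $\ell$; it is \emph{not} true that the order is bounded by $\ell$ itself (for instance, primitive $p$-th roots of unity have degree $p-1$), so the bound $\ell^2+2$ must be justified via the arithmetic of cyclotomic fields.

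\medskip

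First I would recall the standard fact that a primitive $m$-th root of unity has degree exactly $\varphi(m)$ over $\QQ$, where $\varphi$ is Euler's totient function. Thus if $\zeta$ is a root of unity of degree $\ell$, its order $m$ satisfies $\varphi(m) = \ell$. The crux is therefore to bound $m$ in terms of $\varphi(m)$: the plan is to invoke (or quickly derive) a lower bound for the totient function of the shape $\varphi(m) \geq \sqrt{m}$ for all $m \geq 2$, or more precisely an inequality guaranteeing $m \leq \varphi(m)^2 + \text{const}$. One clean way to see $\varphi(m) \geq \sqrt{m/2}$ (which more than suffices) is through the product formula $\varphi(m) = m \prod_{p \mid m}(1 - 1/p)$ together with elementary estimates; combined with $\varphi(m)=\ell$ this yields a bound of the form $m \leq 2\ell^2$ or, after sharpening, $m \leq \ell^2 + 2$. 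I would aim to land exactly on $m \leq \ell^2 + 2$, checking the small cases $\ell = 1, 2$ by hand (where $m \in \{1,2\}$ and $m \in \{3,4,6\}$ respectively) since asymptotic inequalities can fail for tiny arguments.

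\medskip

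Once the order $m$ is shown to satisfy $m \leq \ell^2 + 2$, the forward direction follows: since $\zeta$ has order $m$ we have $\zeta^m = 1$ with $1 \leq m \leq \ell^2+2$, so indeed $1 \in \{\zeta, \zeta^2, \ldots, \zeta^{\ell^2+2}\}$. The main obstacle, as noted, is pinning down the totient lower bound tightly enough to reach the stated constant $\ell^2 + 2$ rather than a looser $C\ell^2$; this is a purely number-theoretic estimate and I would isolate it as the technical heart of the argument. A subtlety to handle carefully is that the degree $\ell$ of $\zeta$ over $\QQ$ equals $\varphi(m)$ \emph{exactly} only because the minimal polynomial of a primitive $m$-th root of unity is the cyclotomic polynomial $\Phi_m$, which is irreducible over $\QQ$ of degree $\varphi(m)$; I would state this irreducibility as the one nontrivial input and cite it if a reference is preferred over reproving it.
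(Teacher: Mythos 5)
Your overall strategy is the same as the paper's: identify the order $m$ of $\zeta$ via $\ell=\varphi(m)$ (irreducibility of the cyclotomic polynomial), then bound $m$ from above by a lower bound on the totient function. However, the key estimate as you state it does not hold together. The inequality $\varphi(m)\geq\sqrt{m}$ is \emph{false} for all $m\geq 2$: it fails exactly at $m=2$ and $m=6$ (since $\varphi(2)=1<\sqrt{2}$ and $\varphi(6)=2<\sqrt{6}$), and it is only with these two exceptions that the paper cites it. Those exceptions are not a peripheral technicality. The case $m=6$ has $\ell=\varphi(6)=2$ and order $6=\ell^2+2$, so it is precisely the reason the lemma's bound is $\ell^2+2$ rather than $\ell^2$, and it shows the stated bound is tight; any proof claiming $\varphi(m)\geq\sqrt m$ unconditionally would prove the false bound $m\leq \ell^2$.

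Your fallback estimate $\varphi(m)\geq\sqrt{m/2}$, which you claim ``more than suffices,'' does not suffice. It yields only $m\leq 2\ell^2$, and $2\ell^2>\ell^2+2$ for \emph{every} $\ell\geq 2$, so the deficit is not confined to tiny arguments and cannot be closed by checking $\ell=1,2$ by hand. Since the order $m$ is the least positive exponent with $\zeta^m=1$, we have $1\in\{\zeta,\zeta^2,\ldots,\zeta^{\ell^2+2}\}$ if and only if $m\leq \ell^2+2$; knowing only $m\leq 2\ell^2$ leaves open, for each $\ell\geq 2$, the possibility of a root of unity of degree $\ell$ whose order lies strictly between $\ell^2+2$ and $2\ell^2$, and no ``sharpening'' step is actually supplied to rule this out. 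The repair is exactly the paper's argument: invoke $\varphi(m)\geq\sqrt{m}$ for $m\neq 2,6$ (a known bound, cited in the paper), giving $m\leq\ell^2$ in those cases, and verify the two exceptional orders directly: $m=2$ forces $\ell=1$ and $2\leq 1^2+2$, while $m=6$ forces $\ell=2$ and $6\leq 2^2+2$. Your instinct that small cases must be checked by hand is correct, but the cases to check are the exceptional \emph{orders} $m\in\{2,6\}$ of the $\sqrt{m}$ bound, not small degrees $\ell$ under the weaker $\sqrt{m/2}$ bound; with that correction your outline goes through.
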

    \begin{proof}
        One direction is clear.
        For the other, suppose $\zeta$ is a primitive $m$-th root of unity.
        Then the minimal polynomial of $\zeta$ is
        $\Phi_m(x)\in\QQ[x]$
        where $\Phi_m$ is the $m$-th cyclotomic polynomial and
        $\ell=\deg\Phi_m=\varphi(m)$ ($\varphi$ is the Euler's totient function).
        If $m$ is either $2$ or $6$, then $\ell=1$ or 2 respectively and the statement holds.
        Assume $m\neq2,6$.
        We use the known bound $\varphi(m)\geq\sqrt{m}$ for $m\neq2,6$ (see~\cite[p. 9]{totient-handbook}).
        If $m>\ell^2$, then we have
        \begin{equation*}
            \varphi(m)\geq\sqrt{m}>\ell,
        \end{equation*}
        a contradiction.
        Hence $\zeta$ is a primitive $m$-th root of unity for some $m\leq \ell^2$.
    \end{proof}

    The next proposition derives upper bounds for the number of iterations in the naive algorithm for each family of representative curves.
    \begin{proposition}\label{prop:rep-curve-bounds}
        Let $X$ be one of the representative curves in Table~\ref{tab:param} with coordinate ring $R(L)$ where $L\subset\KK[x,y]_{n+2}$.
        Let $(\alpha:\beta)\in\PP^1$ and assume the parameters from  the table are defined over a number field $F$ with $[F:\QQ]=\ell$.
        Then for each singularity family, Table~\ref{tab:bounds} lists an upper bound for the number of iterations needed in the naive algorithm.
        In particular, $\left(16(n+2)^4 \ell^2+2\right)^2$ serves as an upper bound for all representative curves.
    \end{proposition}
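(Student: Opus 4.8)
The plan is to handle each of the eight singularity families separately, treating each case by examining the explicit Khovanskii-finiteness conditions recorded in Table~\ref{tab:loci} and deciding the largest value of $k$ that could possibly need to be tested in the naive algorithm. For the families whose conditions do not involve any power $\alpha^{(n+2)k}$ or $\beta^{(n+2)k}$ equalling a constant---namely the $(3,4,5)$-cusp and $(2,5)$-cusp---the relevant condition is simply a closed condition on the parameters that either holds or fails independently of $k$, so a small bound (indeed $k=1$) suffices. The case of two cusps is similar: the three conditions in Table~\ref{tab:loci} are $k$-independent, so again a constant bound works.

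The interesting cases are those where a condition has the shape ``there exists $k$ such that $\gamma^{(n+2)k}=\delta^k$'' for some quantities $\gamma,\delta$ built from the parameters and from $\alpha$ or $\beta$. The key observation is that $\val_{(\alpha:\beta)}$ being \kf{} forces certain ratios to be roots of unity. Concretely, for a family like the \textbf{cusp with smooth branch}, the condition $\alpha^{(n+2)k}=1$ with $(n+2)\alpha=a$ means $\alpha=a/(n+2)$ is determined by the parameters, and then the remaining condition is that $\alpha^{n+2}$ is a root of unity. Here I would invoke Lemma~\ref{lem:root-unity}: the element $\alpha^{n+2}$ lies in $F$ (or a small extension thereof), so its degree over $\QQ$ is controlled by $\ell$ and by $n+2$, and by the lemma it is a root of unity if and only if one of its first $(\text{degree})^2+2$ powers equals $1$. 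This directly bounds the exponent $k$ we must test. The same mechanism applies to the \textbf{tacnode}, \textbf{ordinary triple point}, \textbf{one cusp one node}, and \textbf{two nodes} families: in each, after using the $k$-independent equation(s) to pin down $\alpha$ or $\beta$ in terms of the parameters, the surviving condition is that some explicit element of a field extension of $F$ of bounded degree is a root of unity, and Lemma~\ref{lem:root-unity} converts this into a finite search whose length depends only on that degree.

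The main obstacle will be bookkeeping the degrees of the field extensions in which the relevant roots of unity live. For instance, in the tacnode case the condition $(n+2)\alpha^2 a+\alpha b-(n+2)=0$ expresses $\alpha$ as an algebraic number of degree at most $2$ over $F$, so $\alpha$ has degree at most $2\ell$ over $\QQ$; then $(-\alpha)^{n+2}$ has degree at most $2\ell$, and the relevant power $\alpha^{(n+2)k}$ being $\pm 1$ requires $(-\alpha)^{n+2}$ to be a root of unity, to which Lemma~\ref{lem:root-unity} gives the bound $(2\ell)^2+2$. In the two-nodes case $\alpha=-1$ is fixed and $\beta$ is constrained by $(-a)^k=\beta^{(n+2)k}$, so $\beta^{n+2}/(-a)$ must be a root of unity living in an extension of degree at most $(n+2)\ell$ or so; I must track these degrees carefully to land at the uniform bound $\left(16(n+2)^4\ell^2+2\right)^2$. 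The final step is to verify that this single expression dominates every entry of Table~\ref{tab:bounds}: since each individual bound is of the form $(c\,(n+2)^j\ell^i+2)^2$ with $c(n+2)^j\ell^i\leq 16(n+2)^4\ell^2$ after accounting for the worst-case extension degree, the uniform bound follows by monotonicity, completing the proposition.
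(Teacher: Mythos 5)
Your handling of the tacnode, cusp-with-smooth-branch, and one-cusp-one-node families matches the paper's argument: in each of those cases one of the two conditions in Table~\ref{tab:loci} is $k$-independent (or determines $\alpha$ outright, e.g.\ $(n+2)\alpha=a$), so $\alpha$ is pinned to an extension of $F$ of degree at most $2$, and Lemma~\ref{lem:root-unity} finishes the job. The $k$-independent families with bound $1$ are also fine. But your proposal has a genuine gap precisely in the two hardest cases, the \textbf{ordinary triple point} and \textbf{two nodes} --- the families responsible for the uniform bound $\left(16(n+2)^4\ell^2+2\right)^2$ in the statement. You claim that there too one can ``use the $k$-independent equation(s) to pin down $\alpha$ or $\beta$'' and then apply Lemma~\ref{lem:root-unity}. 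There is no such equation: for the ordinary triple point \emph{both} conditions, $(\alpha+1)^{(n+2)k}=(1+b+a)^k$ and $\alpha^{(n+2)k}=a^k$, involve $k$, and likewise for two nodes. Consequently $\alpha$ (resp.\ $\beta$) is \emph{not} a priori of bounded degree over $F$: any $\alpha=a^{1/(n+2)}z$ with $z$ an arbitrary root of unity satisfies the second equation for a suitable $k$, and as the order of $z$ grows the degree of $\alpha$ over $F$ is unbounded. Your parenthetical reasoning in the two-nodes case (``$\beta^{n+2}/(-a)$ must be a root of unity living in an extension of degree at most $(n+2)\ell$ or so'') is exactly where this breaks: an element being a root of unity does not confine it to a bounded extension, so Lemma~\ref{lem:root-unity} cannot be invoked for it.

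The missing ingredient is a theorem about solutions in roots of unity. The paper takes $(n+2)k$-th roots, writes $\alpha = a^{1/(n+2)}z$ and $\alpha+1=(1+b+a)^{1/(n+2)}z'$ with $z,z'$ roots of unity, eliminates $\alpha$ to obtain a single equation in $(z,z')$ with coefficients in an extension $F'$ with $[F':F]\leq (n+2)^2$, and then applies a result of Ruppert \cite[Corollary 6]{ruppert}: any solution $(z,z')$ of such an equation in roots of unity has degree at most $4$ over $F'$. Only this bounds the degree of $\alpha$ over $F$ by $4(n+2)^2$, after which Lemma~\ref{lem:root-unity} applies to each of the two ratios $\left((\alpha+1)^{n+2}/(1+b+a)\right)^{k_1}=1$ and $\left(\alpha^{n+2}/a\right)^{k_2}=1$ separately, giving $k_i\leq \left(4(n+2)^2\ell\right)^2+2$; the outer square in the final bound then arises because one must take $k=k_1k_2$ to satisfy both conditions simultaneously --- a point your bookkeeping also omits. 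Without Ruppert's theorem (or an equivalent statement on torsion points, such as Mann's theorem on linear relations among roots of unity), the degree-tracking strategy you describe cannot get started in these two cases.
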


    \begin{table}
        \begin{adjustwidth}{-2cm}{-2cm}
            \centering
            \begin{tabular}{ ll }
                \toprule
                \bf \makecell{Singularity\\ Configuration}
                & \bf \makecell{Bound for the\\ Naive Algorithm} \\
                \midrule
                Tacnode & $(2\ell)^2+2$  \\
                (3,4,5)-cusp & $1$ \\
                (2,5)-cusp & $1$ \\
                Cusp with smooth branch & $\ell^2+2$ \\
                Ordinary triple point & $\left(16(n+2)^4\ell^2+2\right)^2$ \\
                Two cusps & $1$ \\
                One cusp, one node & $\ell^2+2$\\
                Two nodes & $\left(16(n+2)^4\ell^2+2\right)^2$ \\
                \bottomrule
            \end{tabular}
        \end{adjustwidth}
        \vspace{1cm}
        \caption[Bounds for the naive algorithm]{Bounds for $k$ in Algorithm~\ref{alg:naive} for a representative curve parameterized over a number field of degree $\ell$ over $\QQ$}
        \label{tab:bounds}
    \end{table}

    \begin{proof}
        The curve $X$ belongs to one of the families in Table~\ref{tab:param}, i.e.\ we are given the corresponding parameters in the second column of this table. 
        The corresponding equations in Table~\ref{tab:loci} then determine Khovanskii-finiteness as follows:
        For $\val_{(\alpha:\beta)}$ to be \kf{} with respect to $R(L)$, the parameters of $X$ must satisfy these equations for at least one $k\in\NN$.
        This is the same $k$ as in Theorem~\ref{thm:kf-criterion} and consequently the same $k$ as the iteration number in Algorithm~\ref{alg:naive}.
        All we need to do then is to specify a bound $M$ such that the equations are satisfied for some value $k\in\NN$ if and only if they are satisfied for some $k\leq M$.
        For families whose equations in Table~\ref{tab:loci} do not involve $k$, i.e. (3,4,5)-cusp, (2,5)-cusp and two cusps, this bound is 1.
        We prove the bounds for the other families.
        The last statement of the proposition then follows by taking the maximum of the bounds in Table~\ref{tab:bounds}.

        \textbf{Tacnode:} Let $X$ be a representative curve in this family.
        By the corresponding equations in Table~\ref{tab:loci}, if $\beta=0$, $\val_{(\alpha:\beta)}$ is not \kf{}.
        We may thus assume $\beta=1$.
        Then $\alpha$ and the given parameter $a$ must satisfy
        \begin{align*}
            \left( -(-\alpha)^{n+2} \right)^k&=1,\\
            (n+2)\alpha^2 a+\alpha b-(n+2)&=0.
        \end{align*}
        The second equation shows that $\alpha$ lives in at most a quadratic extension of $F$.
        After extending $F$ we may assume $\alpha\in F$ and $[F:\QQ]=2\ell$.
        By Lemma~\ref{lem:root-unity}, this system is satisfied for some $k\in\NN$ if and only if it is satisfied for some $k\leq (2\ell)^2+2$ as claimed.

	The bounds for curves with a \textbf{cusp with smooth branch} and curves with \textbf{one cusp and one node} can be proved similarly.
        In each case we need to check whether an algebraic number defined over $F$ is a $k$-th root of unity.

        \textbf{Ordinary triple point:} Let $X$ be a representative curve having an ordinary triple point with given parameters $a,b$ as in Table~\ref{tab:param}.
        According to Table~\ref{tab:loci}, for $\val_{(\alpha:\beta)}$ to be \kf{}, we must have $\beta\neq 0$.
        We may assume $\beta=-1$.
        Then $\alpha$ and the parameters $a,b$ must satisfy
        \begin{align}\label{eq:otp-eq}
        \begin{split}
            (\alpha+1)^{(n+2)k}&=(1+b+a)^k,\\
            \alpha^{(n+2)k}&=a^k,
        \end{split}
        \end{align}
        which is equivalent to demanding that there exist integers $k_1,k_2$ such that
        \[
            \left( \frac{(\alpha+1)^{n+2}}{1+b+a} \right)^{k_1}=\left( \frac{\alpha^{n+2}}{a} \right)^{k_2}=1.
        \]
        The denominators are nonzero because of the conditions on $a,b$ in Table~\ref{tab:param}.
        Then $k=k_1 k_2$ can be used to satisfy equation~\eqref{eq:otp-eq}.
        We will show that an algebraic number $\alpha$ satisfying~\eqref{eq:otp-eq} for some $k\in\NN$ lives in an extension field of degree at most $4(n+2)^2$ over $F$.
        Then after extending $F$, we may assume $\alpha\in F$, $[F:\QQ]=4(n+2)^2\ell$.
        By Lemma~\ref{lem:root-unity}, it is enough to consider the integers $k_1,k_2\leq \left(4(n+2)^2 \ell\right)^2+2$ and thus $\left(16(n+2)^4\ell^2+2\right)^2$ serves as an upper bound for the values of $k$ that we need to check.

        It remains to prove the bound on the degree of $\alpha$ over $F$.
        Taking $(n+2)k$-th roots in~\eqref{eq:otp-eq} gives
        \begin{align*}
            \alpha+1&=(1+b+a)^{\frac{1}{n+2}}z',\\
            \alpha&=a^{\frac{1}{n+2}}z,
        \end{align*}
        where $z,z'$ are some roots of unity.
        By eliminating $\alpha$, we get the equation
        \begin{equation}\label{eq:otp-linear}
        a^{\frac{1}{n+2}}z-(1+b+a)^{\frac{1}{n+2}}z'+1=0,
        \end{equation}
        in $z,z'$ with coefficients in some extension $F'$ with $[F':F]\leq (n+2)^2$.
        By a result of W.~Ruppert~\cite[Corollary 6]{ruppert}, any solution $(z,z')$ to this equation in roots of unity has degree at most 4 over $F'$.
        Consequently, $\alpha$ is defined over an extension field of degree at most $4(n+2)^2$ over $F$.

        The proof of the bound for curves with \textbf{two nodes} is similar to the case of an ordinary triple point.
	Here, by Table~\ref{tab:loci} we are considering the system of equations 
	\begin{align}\label{eqn:tn}
        \begin{split}
		(-a)^k&=\beta^{(n+2)k},\\
		b^k(\beta+1)^{(n+2)k}&=(-1)^k(\beta+c)^{(n+2)k}.
        \end{split}
        \end{align}
After taking $(n+2)k$-th roots and eliminating $\beta$, we arrive at the equation
\[
    (-b)^{\frac{1}{n+2}}\left((-a)^{\frac{1}{n+2}}z+1\right)=\left((-a)^{\frac{1}{n+2}}z+c\right)z'
\]
in the roots of unity $z,z'$. The coefficients in this equation live in some extension $F'$ with $[F':F]\leq (n+2)^2$. Again by \cite[Corollary 6]{ruppert}, any solution $(z,z')$ to this equation in roots of unity has degree at most 4 over $F'$, and $\beta$ is defined over an extension field of degree at most $4(n+2)^2$ over $F$.

In a similar fashion to the case of an ordinary triple point, we may then use the bound on the degree of $\beta$ and Lemma \ref{lem:root-unity} to conclude that it suffices to check \eqref{eqn:tn} for $k\leq \left(16(n+2)^4\ell^2+2\right)^2$.
    \end{proof}

    We now move on to consider arbitrary non-degenerate degree $d$ rational curves $X\subset \PP^n$ of arithmetic genus two. Consider a rational curve $X=\Proj(R(L))$ for some $L\subset \KK[x,y]_d$ with $\PP(L^\perp)\cap C_d=\emptyset$. Adapting terminology used in Proposition \ref{prop:project}, we say that $L$ has \emph{ramification defined over a number field $F$} if the corresponding projection $\pi:C_d\to X$ has ramification defined over $F$. Note that $X$ and $\pi$ being defined over $F$ is equivalent to $L$ being defined over $F$.

    \begin{lemma}\label{lemma:galois}
Let $X=\Proj(R(L))$ be as above for some $L$ defined over a number field $F$, and assume that $X$ has arithmetic genus two.
Then there is an extension $F'$ of $F$ of degree at most $24$ such that $L$ has ramification defined over $F'$.
    \end{lemma}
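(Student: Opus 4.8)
The plan is to exhibit $\pi^{-1}(\sing(X))$ as a Galois-stable set of at most four points on $C_d\cong\PP^1$, and then to let $F'$ be the field over which this finite set splits. Since $L$ is defined over $F$, so are the graded ring $R(L)$, the curve $X=\Proj(R(L))$, and the projection $\pi\colon C_d\to X$ determined by the linear system $L$; the rational normal curve $C_d$ itself is defined over $\QQ$. As noted after Theorem~\ref{thm:kf-criterion}, since $\PP(L^\perp)$ avoids $C_d$ the map $\pi$ is the normalization. Consequently $\sing(X)$ is a zero-dimensional subscheme of $X$ defined over $F$, and its preimage $S:=\pi^{-1}(\sing(X))\subset C_d(\bar F)$ is a finite set stable under the natural action of $G:=\mathrm{Gal}(\bar F/F)$.

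First I would bound $|S|$. Because $\pi$ is the normalization, the number of preimages of a singular point equals its number of branches, so $|S|$ is the total number of branches of $X$ over all its singular points. Since $X$ has arithmetic genus two, Theorem~\ref{thm:singularities-lambda} restricts the singularity configuration to one of those in Table~\ref{tab:param}, and counting branches case by case gives: one branch for a $(3,4,5)$-cusp or a $(2,5)$-cusp; two branches for a tacnode, a cusp with smooth branch, or two cusps; three branches for an ordinary triple point or for a node together with a cusp; and four branches for two nodes. Hence $|S|\le 4$, with equality only in the two-node case.

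Next I would run the standard Galois descent argument. The action of $G$ on $S$ yields a homomorphism $\rho\colon G\to\mathrm{Sym}(S)$, and I would take $F'$ to be the fixed field of $\ker\rho$. Then $F'/F$ is Galois with $\mathrm{Gal}(F'/F)\cong\im\rho\le\mathrm{Sym}(S)$, so that $[F':F]=|\im\rho|\le|S|!\le 24$. Every element of $\mathrm{Gal}(\bar F/F')=\ker\rho$ fixes each point of $S$, so each point of $S$ is defined over $F'$. Since $C_d$, $X$, and $\pi$ remain defined over $F'\supseteq F$, this is precisely the statement that $L$ has ramification defined over $F'$, completing the argument.

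I do not anticipate a serious obstacle; the only delicate point is the branch count, where one must remember that for a space curve (here $n\ge 3$) an ordinary triple point has three branches yet delta-invariant two, so the genus-two constraint does not by itself rule it out. Invoking the explicit classification of Theorem~\ref{thm:singularities-lambda} settles this cleanly and confirms that four is the true maximum, which is what pins the bound at $4!=24$.
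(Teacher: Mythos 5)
Your proposal is correct and follows essentially the same route as the paper: the paper also takes $F'$ to be the Galois closure of the field of definition of the points of $\pi^{-1}(\sing(X))$ (your fixed field of $\ker\rho$ is the same field), embeds $\mathrm{Gal}(F'/F)$ into the symmetric group on those points, and bounds $|\pi^{-1}(\sing(X))|\le 4$ via Theorem~\ref{thm:singularities-lambda}, giving $[F':F]\le 4!=24$. Your explicit case-by-case branch count merely spells out what the paper cites from the classification, so there is no substantive difference.
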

    \begin{proof}
Let $F'$ be the Galois closure of the field of definition of the points of $\pi^{-1}(\sing(X))$; clearly $L$ has ramification defined over $F'$. Since $L$ (and thus $\pi$) is defined over $F$, there is an injection of the Galois group of $F'$ over $F$ into the symmetric group on the elements of $\pi^{-1}(\sing(X))$. On the other hand, by Theorem \ref{thm:singularities-lambda}, $\pi^{-1}(\sing(X))$ has at most $4$ elements. Hence, the degree of $F'$ over $F$ is at most $4!=24$.
    \end{proof}

    We now prove the main result of the section.

    \begin{proof}[{Proof of Theorem~\ref{thm:general-bound}}]
	    By Theorem \ref{thm:kf-criterion}, the condition of the theorem is clearly sufficient for Khovanskii-finiteness. We will show that it is also necessary. Our strategy is to reduce to the setting of Proposition \ref{prop:rep-curve-bounds}.
	    By Lemma \ref{lemma:galois}, we may assume that $L$ has ramification defined over $F$ after multiplying the degree $\ell$ of $F$ by $24$.
	    By Proposition \ref{prop:project}, the curve $X=\Proj(R(L))$ is obtained via an isomorphic projection of a degree $d$ curve $X'=\Proj(R(L'))$ in $\PP^{d-2}$. Furthermore, since $L$ has ramification defined over $F$, so does $L'$.

In \S\ref{sec:param} we saw that $X'$ is projectively equivalent over $\KK$ to a representative curve. We claim that in fact, this projective equivalence can be realized over an extension of $F$ of degree at most $d$. Indeed, 
To show in \S\ref{sec:param} that $X'$ is projectively equivalent over $\KK$ to a representative curve, we used elements of $\pgl{2}$ whose action on $\PP^{d}$:
\begin{enumerate}
	\item Moved points of $\pi^{-1}(\sing(X'))$ to special points of $C_d\subset \PP^d$;\label{move:one}
	\item Moved at most one point on a tangent or secant line of $C_d$ to a special position.\label{move:two}
\end{enumerate}
Since $L'$ has ramification defined over $F$, the points we are trying to move in items \ref{move:one} and \ref{move:two} are defined over $F$; the points we are trying to move them to are as well. To move points as in item \ref{move:one}, it suffices to consider $\pgl{2}$ elements defined over $F$, since $\mathrm{PGL}(2,F)$ acts transitively on the $F$-rational points of $C_d$. On the other hand, it is a straightforward computation to check that the desired transformation of item \ref{move:two} can be obtained after extending $F$ by degree $d$.

Thus, after possibly extending $F$ again (and multiplying the degree of $F$ by $d$), we will now be assuming that $X'$ is a representative curve.
It now follows from Proposition \ref{prop:rep-curve-bounds} 
that $\val_{(\alpha:\beta)}$ is \kf{} with respect to $R(L')$ if and only if there exists $1\leq k \leq ((96d^3\ell)^2+2)^2$ such that $(\beta x-\alpha y)^{dk}\in (L')^k$. (Recall that we had to multiply the degree of $F$ by $24\cdot d$).

	    Since $X$ is a projection of $X'$, we have $R(L)\subset R(L')$.
By \cite[Theorem 1]{regularity}, $X$ is $d-n+1$ regular, and $X'$ is $3$-regular.
 Since these rings have the same Hilbert polynomials, by the above regularity, they must agree in degrees $k \geq (d-n)$. In other words, $L^k=(L')^k$ for $k\geq (d-n)$. 

To finish the proof, suppose that $\val_{(\alpha:\beta)}$ is \kf{} with respect to $R(L)$. Since $L^k=(L')^k$ for $k\geq (d-n)$, $\val_{(\alpha:\beta)}$ is \kf{} with respect to $R(L')$. By the above, there exists $1\leq k \leq ((96d^3\ell)^2+2)^2$ such that $(\beta x-\alpha y)^{dk}\in (L')^k$. If $k\geq d-n$, then we also have $(\beta x-\alpha y)^{dk}\in L^k$. If instead $k<d-n$, then there exists a natural number $m$ such that $(d-n)\leq mk \leq 2(d-n-1)$, and we have $(\beta x - \alpha y)^{dmk}\in (L')^{mk}=L^{mk}$. It follows that the condition of the theorem is necessary for Khovanskii-finiteness.
    \end{proof}

    \section{Rational Curves with Unique Unibranch Singularity}\label{sec:unibranched}

    We now show that rational curves with a single unibranch singularity are often Khovanskii-finite. For any rational curve $X$, let $Q$ be a unibranch singular point. The point $Q$ determines a valuation $\val_{\phi^{-1}(Q)}$, where $\phi:\PP^1\to X$ is the normalization map. Indeed, since $Q$ is unibranch, the preimage $\phi^{-1}(Q)$ is a single point, so $\val_{\phi^{-1}(Q)}$ is well-defined.

    \begin{theorem}
        \label{mainthm:3}
	Let $X\subset \PP^n$ be a non-degenerate rational curve of degree $d$ and arithmetic genus $g\leq d/2$.
        Assume that $X$ has only one singular point $Q$, and this is a unibranch singularity.
        Then the valuation corresponding to $Q$ is Khovanskii-finite with respect to $X$.
    \end{theorem}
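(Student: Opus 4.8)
The plan is to translate the statement into the containment criterion of Theorem~\ref{thm:kf-criterion} and then settle that containment by a single local computation at $Q$ using the conductor of the singularity.

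First I would normalize the setup. Let $\phi\colon\PP^1\to X$ be the normalization and $P=\phi^{-1}(Q)$ the unique preimage of the unibranch point $Q$; after acting by $\pgl{2}$ I may assume $P=(0:1)$. Writing $R(L)=\bigoplus_k L^k$ with $L\subset\KK[x,y]_d$ the pullback of linear forms, the valuation attached to $Q$ is $\val_{(0:1)}$, and by Theorem~\ref{thm:kf-criterion} it is \kf{} exactly when $x^{dk}\in L^k$ for some $k$. Thus everything reduces to producing one degree $k$ in which the top monomial $x^{dk}$ lies in $L^k$.

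Next I would pass to high degree. By \cite[Theorem 1]{regularity} the curve $X$ is $(d-n+1)$-regular, so for all sufficiently large $k$ one has $H^1(\CO_X(k))=0$ and $L^k=H^0(X,\CO_X(k))$; I fix any such $k$, making no attempt to optimize it. The heart of the argument is the normalization sequence $0\to\CO_X\to\phi_*\CO_{\PP^1}\to\mathcal S\to 0$, in which $\mathcal S$ is a skyscraper at $Q$ of length $\delta_Q$. Because $X$ is rational with a single singular point, $p_a(X)=\delta_Q=g$, so $\mathcal S$ has length $g$. Twisting by the line bundle $\CO_X(k)$ and taking global sections identifies $L^k=H^0(\CO_X(k))$ with the kernel of the restriction
\[
\KK[x,y]_{dk}=H^0\bigl(\PP^1,\CO(dk)\bigr)\longrightarrow \mathcal S_Q=\widetilde\CO/\CO_{X,Q},
\]
where $\widetilde\CO=\CO_{\PP^1,P}$ and, after trivializing $\CO_X(k)$ near $Q$ by a fixed $s_0\in L$ with $s_0(P)\neq0$, this map sends a form $f$ to the class of $f/s_0^k$.

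Finally I would evaluate this map on $x^{dk}$. The element $z=x^{dk}/s_0^k$ lies in $\widetilde\CO$ with $\ord_P(z)=dk$. The hypothesis $g\le d/2$ forces the conductor $c$ of the unibranch singularity to satisfy $c\le 2\delta_Q=2g\le d\le dk$; since for a unibranch singularity the conductor ideal equals $\{z\in\widetilde\CO:\ord_P(z)\ge c\}$ and is contained in $\CO_{X,Q}$, we conclude $z\in\CO_{X,Q}$, i.e.\ $x^{dk}$ maps to $0$. Hence $x^{dk}\in L^k$, and Theorem~\ref{thm:kf-criterion} gives Khovanskii-finiteness. I expect the only delicate point to be the bookkeeping in this last step—correctly matching the cokernel map with reduction modulo $\CO_{X,Q}$ and invoking the standard description of the conductor ideal for a one-branch singularity. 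The genuine arithmetic input is the implication $g\le d/2\Rightarrow c\le d$, which is exactly what makes $dk\ge c$ hold for every $k\ge 1$ and lets the construction succeed as soon as $k$ is past the regularity threshold.
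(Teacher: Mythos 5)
Your proof is correct, but it takes a genuinely different route from the paper's. The paper first invokes Propositions \ref{prop:project} and \ref{prop:same} to replace $X$ by an isomorphic re-projection into $\PP^{d-g}$, so that $d-n\leq g$, and then argues purely combinatorially with value semigroups: writing $\Sigma$ for the value semigroup of $\CO_{X,Q}$ and $\Lambda$ for the image of $L$ under the valuation, the $g$ gaps of $\Sigma$ all lie below $2g\leq d$ (the same conductor bound you invoke), while $\Lambda^c\subset\{0,\dots,d\}$ has only $d-n\leq g$ elements; since $\Lambda\subset\Sigma$, this forces $\Sigma^c=\Lambda^c$, hence $d\in\Lambda$, i.e.\ $L$ itself contains the pure power $(\beta x-\alpha y)^d$, giving Khovanskii-finiteness already at $k=1$ for the re-embedded curve. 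You instead stay in the given embedding and replace the counting by cohomology: the normalization sequence twisted by $\CO_X(k)$, together with regularity/saturation, identifies $L^k$ for large $k$ with the kernel of $\KK[x,y]_{dk}\to\widetilde{\CO}/\CO_{X,Q}$, and the description of the conductor ideal of a unibranch singularity as $\{z\in\widetilde{\CO}:\ord_P(z)\geq c\}\subseteq\CO_{X,Q}$, combined with $c\leq 2\delta_Q=2g\leq d\leq dk$, puts $x^{dk}$ in that kernel. Both arguments rest on the identical arithmetic inputs ($\delta_Q=g$ and the conductor bound $c\leq 2\delta_Q$), but they convert $dk\geq c$ into the containment of Theorem \ref{thm:kf-criterion} differently: the paper's semigroup count is more elementary (no sheaf cohomology) and yields the sharper structural fact that after isomorphic projection the linear system contains a $d$-th power, while your argument avoids the projection reduction entirely and gives $x^{dk}\in L^k$ directly in the original embedding for every $k$ past the regularity threshold. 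Your worry about the final bookkeeping is unfounded---the identification of the restriction map with reduction modulo $\CO_{X,Q}$ is standard, and the conductor facts you cite are exactly those the paper uses; the only point worth making explicit is that $s_0$ exists because the linear system $L$ defining $\phi$ is basepoint free.
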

    \begin{proof}
	    By Proposition \ref{prop:project} and Proposition \ref{prop:same}, we may assume without loss of generality that $g\geq d-n$. Let $R(L)$ be the projective coordinate ring of $X\subset \PP^n$, where $L\subset \KK[x,y]_d$. We consider the valuation semigroup $\Sigma\subset \ZZ_{\geq 0}$ of $\CO_{X,Q}$ with respect to the valuation $\val$ corresponding to $Q$. Since the only singularity of $X$ is $Q$, the delta-invariant of $Q$ is equal to the arithmetic genus $g$. Furthermore, the delta-invariant is exactly the size  of the complement $\Sigma^c$ of $\Sigma$ in $\ZZ_{\geq 0}$.

	    Since the valuation of the conductor of a unibranch singularity never exceeds twice the delta-invariant, every one of the $g$ elements $m\in \Sigma^c$ must satisfy $m<2g$.
	    On the other hand, the image $\Lambda$ of $L$ under $\val$ consists of exactly $n+1$ elements of the set $\{0,1,\ldots,d\}$. Its complement $\Lambda^c$ thus consists of $d-n\leq g$ elements. Since $\Lambda\subset \Sigma$ and $2g\leq d$ we conclude that $\Sigma^c=\Lambda^c$. It follows that $\Sigma$, and thus $\Lambda$ has an element of valuation $d$. But this implies that $\val$ is Khovanskii-finite with respect to $X$ by Theorem \ref{thm:kf-criterion}.
    \end{proof}
    \begin{rem}
	    If we assume that $d<2n$, then the hypothesis in Theorem \ref{mainthm:3} on the arithmetic genus is automatically satisfied. Indeed, $g\leq d-n$ by Castelnuovo's bound, see e.g.~\cite[Theorem 1.1]{buczynski-singular}. But then $2g\leq 2d-2n=d+(d-2n)<d$.
    \end{rem}

    \begin{rem}
	    The bound on $g$ in Theorem \ref{mainthm:3} is tight. Indeed, consider 
	    \[L=\langle x^7+y^7,x^3y^4,x^2y^5,xy^6\rangle \subset \KK[x,y]_7\]
	    and $X=\Proj(R(L))\subset \PP^3$. The curve $X$ has degree $7$, and has a single unibranch singularity at the image of $(1:0)$. The arithmetic genus of this curve is $4$, so we have $2g=d+1$. However, it is straightforward to check that $R(L)$ is \emph{not} Khovanskii-finite with respect to the valuation $\val_{(1:0)}$.
    \end{rem}

    \printbibliography
    \typeout{get arXiv to do 4 passes: Label(s) may have changed. Rerun}
\end{document}